\DeclareMathOperator{\rk}{rk}
\newcommand{\Ind}{\mathrm{Ind}}
\newcommand{\Res}{\mathrm{Res}}
\newcommand{\ch}{\mathrm{ch}}
\newcommand{\s}{\mathfrak{S}}
\newcommand{\U}{\mathrm{U}}
\newcommand{\M}{\mathrm{M}}
\newcommand{\x}{\mathbf{x}}
\newtheoremstyle{teoremas} %
{11pt}%
{11pt}%
{\itshape}%
{}%
{\bfseries}%
{}%
{.5em}%
{}%
\theoremstyle{teoremas} %
\newtheorem{theorem}{Theorem}[section] %
\newtheorem{corollary}[theorem]{Corollary} %
\newtheorem{lemma}[theorem]{Lemma} %
\newtheorem{proposition}[theorem]{Proposition} %
\newtheorem{conjecture}[theorem]{Conjecture} %
\newtheoremstyle{definition} %
{11pt}%
{11pt}%
{}%
{}%
{\bfseries}%
{}%
{.5em}%
{}%
\theoremstyle{definition}
\newtheorem{remark}[theorem]{Remark} %
\crefname{theorem}{theorem}{theorems} %
\Crefname{theorem}{Theorem}{Theorems} %
\crefname{lemma}{lemma}{lemmas} %
\Crefname{lemma}{Lemma}{Lemmas} %
\crefname{proposition}{proposition}{propositions} 
\Crefname{proposition}{Proposition}{Propositions} %
\def\dual#1{\expandafter\dual@aux#1\@nil}
\def\dual@aux#1/#2\@nil{\begin{tabular}{@{}c@{}}#1\\#2\end{tabular}}
\begin{document}
\begin{center}
{\large \bf Equivariant inverse $Z$-polynomials of matroids}
\end{center}

\begin{center}
Alice L.L. Gao$^{1}$, Yun Li$^{2}$, and Matthew H.Y. Xie$^{3}$\\[6pt]

 $^{1,2}$School of Mathematics and Statistics,\\
 Northwestern Polytechnical University, Xi'an, Shaanxi 710072, P.R. China

$^{3}$College of Science, \\
 Tianjin University of Technology, Tianjin 300384, P.R. China\\[6pt]

 Email: $^{1}${\tt llgao@nwpu.edu.cn},
 $^{2}${\tt liyun091402@163.com},
 $^{3}${\tt xie@email.tjut.edu.cn}
\end{center}

\noindent\textbf{Abstract.}
Motivated by the notion of the inverse $Z$-polynomial
introduced by Ferroni, Matherne, Stevens, and Vecchi,
we study the equivariant inverse $Z$-polynomial of a matroid
equipped with a finite group.
We prove that the coefficients of the equivariant inverse $Z$-polynomials are honest representations
and that these polynomials are palindromic.
Explicit formulas are obtained for uniform matroids equipped with
the symmetric group.
The corresponding formulas for $q$-niform matroids are derived
using the Comparison Theorem for unipotent representations. For arbitrary equivariant paving matroids, explicit expressions
are obtained by relating the polynomials of a matroid
to those of its relaxation.
We show that these polynomials are equivariantly unimodal and
strongly inductively log-concave for both uniform and $q$-niform matroids.
Motivated by the properties of equivariant $Z$-polynomials,
we conjecture that the coefficients of the equivariant inverse $Z$-polynomials are equivariantly unimodal
and strongly equivariantly log-concave.

\noindent \emph{AMS Classification 2020:}
05B35, 05E05, 20C30

\noindent \emph{Keywords:}
Equivariant inverse $Z$-polynomial, uniform matroid, paving matroid, equivariant unimodality, strongly induced log-concavity

\noindent \emph{Corresponding Author: Matthew H.Y. Xie}

\section{Introduction}

The objective of this paper is to study the equivariant 
inverse $Z$-polynomials of matroids.
The Kazhdan--Lusztig polynomials of matroids, first introduced by 
Elias, Proudfoot, and Wakefield~\cite{elias2016kazhdan}, 
have attracted considerable attention in recent years. Proudfoot, Xu, and Young~\cite{proudfoot2018z} introduced 
the $Z$-polynomial $Z_\M(t)$ of a matroid $\M$, defined as a 
weighted sum of the Kazhdan--Lusztig polynomials of all 
contractions of $\M$. 
Braden, Huh, Matherne, Proudfoot, and Wang~\cite{braden2020singular} 
provided a cohomological interpretation of $Z_\M(t)$,
where $Z_\M(t)$ is realized as the Hilbert series of the
intersection cohomology module associated with $\M$.
Within the
Kazhdan--Lusztig--Stanley framework,
Ferroni, Matherne, Stevens, and Vecchi~\cite{ferroni2024hilbert}
introduced
the inverse $Z$-polynomial $Y_\M(t)$ of a matroid $\M$.
Gao, Ruan, and Xie \cite{gao2025inverse} further studied the fundamental properties of $Y_\M(t)$, and 
Braden, Ferroni, Matherne, and Nepal~\cite{braden2025invkldeletion} derived a deletion formula for this polynomial.

When matroid $\M$ is equipped with a finite group $W$,
one can define the equivariant Kazhdan--Lusztig polynomial~\cite{gedeon2017equivariant} and the equivariant $Z$-polynomial~\cite{proudfoot2018z}, whose coefficients are honest (rather than virtual) representations of $W$, and taking the dimensions of these representations recovers the corresponding non-equivariant polynomials. 
Within the framework of equivariant Kazhdan--Lusztig--Stanley 
theory, one can similarly define the equivariant inverse 
$Z$-polynomial. 
In this paper, we compute the equivariant inverse $Z$-polynomials
of uniform matroids equipped with the symmetric group,
and of $q$-niform matroids equipped with the general
linear group.
We further obtain explicit formulas for the equivariant inverse
$Z$-polynomials of arbitrary equivariant paving
matroids.
As applications, we establish the equivariant unimodality and strongly
induced log-concavity of these polynomials for uniform and
$q$-niform matroids.

We begin by recalling several fundamental notions concerning matroids, following the notation of Ferroni, Matherne, Stevens, and Vecchi~\cite{ferroni2024hilbert}.
A matroid $\M = (E, \mathscr{B})$ consists of a finite ground set~$E$
and a collection~$\mathscr{B}$ of subsets of~$E$, called its bases.
For any subset $A \subseteq E$, the rank function of~$\M$ is defined by
$\rk_{\M}(A) = \max_{B \in \mathscr{B}} \lvert A \cap B \rvert$,
and the rank of $\M$ is $\rk(\M) = \rk_{\M}(E)$.
A subset $F \subseteq E$ is called a flat if no element of
$E \setminus F$ can be added to $F$ without increasing its rank.
The collection of all flats of $\M$, ordered by inclusion,
forms a lattice. We denote it by $\mathcal{L}(\M)$.
For any subset $A \subseteq E$, we write $\M|_{A}$ and $\M/A$ for the restriction and contraction of $\M$ at $A$, respectively.

Based on these basic notions, we now recall the definition of
equivariant matroids and associated polynomial invariants.
Let $W$ be a finite group 
and let $\mathrm{VRep}(W)$ denote the Grothendieck ring of virtual finite dimensional complex representations of $W$. Let $\mathrm{Rep}(W)\subseteq \mathrm{VRep}(W)$ denote its subsemiring of honest representations.
We define the graded $\mathbb{Z}[t]$-module
$\mathrm{grVRep}(W)
 := \mathrm{VRep}(W) \otimes_{\mathbb{Z}} \mathbb{Z}[t].$
Given an equivariant matroid $W \curvearrowright \M$, let $Q^{W}_{\M}(t)$ and $\mu^{W}_{\M}$ denote the equivariant inverse Kazhdan--Lusztig polynomial and the equivariant M\"{o}bius invariant, respectively. We define the equivariant inverse 
$Z$-polynomial of $W \curvearrowright \M$ as an element of 
$\mathrm{grVRep}(W)$ by
\begin{align}\label{YM}
 Y^{W}_{\M}(t)
 := (-1)^{\rk(\M)} \sum_{[F] \in \mathcal{L}(\M)/W}
 (-1)^{\rk(F)} t^{\rk(\M/F)}
 \,\mathrm{Ind}^{W}_{W_F}\!
 \big(Q^{W_F}_{\M|_{F}}(t) \otimes \mu^{W_F}_{\M/F}\big),
\end{align}
where $W_F \subseteq W$ denotes the stabilizer of $F$, and the sum 
runs over the $W$-orbits of flats of $\M$.
Note that taking dimensions of the representations in \eqref{YM} recovers the ordinary inverse $Z$-polynomial $Y_{\M}(t)$.

Since the equivariant $Z$-polynomials and equivariant inverse $Z$-polynomials 
of matroids are defined in an analogous manner, it is natural to ask 
whether they share similar properties. 
We show that the coefficients of the equivariant inverse
$Z$-polynomials are honest representations, as established in
Proposition~\ref{equ-inv-z-hone}, and are palindromic, as proved in
Proposition~\ref{equ-inv-z-pro}. These properties are analogous to those
of the equivariant $Z$-polynomials.

 Let $\U_{k,n}$ denote the uniform matroid of rank $k$ on $n$ elements. We compute the equivariant inverse 
$Z$-polynomial of the uniform matroid $\U_{k,n}$ equipped with the symmetric group $\s_n $, which is stated as follows.

\begin{theorem}\label{thm-equi-uni}
For the equivariant uniform matroid $\s_n \curvearrowright \U_{k,n}$ with $n\geq k \geq 1$,
\begin{align}\label{eq-ekl-uniform}
Y_{\U_{k,n}}^{\s_n}(t)
&= \sum_{i=0}^{\lfloor k/2 \rfloor}
\Ind_{\s_i \times \s_{n-i}}^{\s_n}\!\bigl(
V_{(1^i)} \otimes V_{(n-k+1,1^{k-i-1})}
\bigr)\, t^{i}\nonumber\\
&\qquad + \sum_{i=0}^{\lfloor (k-1)/2\rfloor}
\Ind_{\s_i \times \s_{n-i}}^{\s_n}\!\bigl(
V_{(1^i)} \otimes V_{(n-k+1,1^{k-i-1})}
\bigr)\, t^{k-i},
\end{align}
where $V_{\lambda}$ denotes the irreducible representation of $\mathfrak{S}_{|\lambda|}$ indexed by $\lambda$, and we set $V_{\lambda} = 0$ if $\lambda$ is not a valid partition.
\end{theorem}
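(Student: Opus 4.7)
The plan is to apply the defining formula~\eqref{YM} directly to $\s_n \curvearrowright \U_{k,n}$ and simplify the resulting alternating sum. The flats of $\U_{k,n}$ are the subsets of $E$ of size at most $k-1$ together with the ground set itself, and $\s_n$ acts transitively on size-$i$ subsets, so there are $k+1$ orbits of flats, one for each $i \in \{0,1,\ldots,k-1\}$ and one for $E$. For a representative $F$ of size $i<k$, the stabilizer is $\s_i\times \s_{n-i}$, the restriction $\U_{k,n}|_F$ is the Boolean matroid $\U_{i,i}$, and the contraction $\U_{k,n}/F$ is the uniform matroid $\U_{k-i,n-i}$; the orbit of $E$ contributes the single term $Q^{\s_n}_{\U_{k,n}}(t)$.

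Next I would substitute the known ingredients into~\eqref{YM}. The equivariant inverse Kazhdan--Lusztig polynomial of a Boolean matroid is trivial, so $Q^{\s_i}_{\U_{i,i}}(t)=V_{(i)}$. The equivariant M\"{o}bius invariant $\mu^{\s_{n-i}}_{\U_{k-i,n-i}}$ can be read off the equivariant characteristic polynomial of $\U_{k-i,n-i}$ at $t=0$ as a signed combination of hook-shape representations of $\s_{n-i}$, and $Q^{\s_n}_{\U_{k,n}}(t)$ is given by the known explicit formula for the equivariant inverse Kazhdan--Lusztig polynomial of a uniform matroid, again in terms of hooks. After tensoring with $V_{(i)}$ and inducing from $\s_i\times\s_{n-i}$ up to $\s_n$, each internal summand becomes an induced representation indexed by a pair consisting of a trivial $\s_i$-module and a hook $\s_{n-i}$-module.

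Finally I would collect coefficients of each power of $t$ and verify that the signed sums collapse to the single term appearing in~\eqref{eq-ekl-uniform}. The main obstacle, and the technical heart of the argument, lies in this collapse: one must show that the combined contributions of the outer signs $(-1)^{\rk(\M)+\rk(F)}$, the M\"{o}bius invariants of the contracted uniform matroids, and the top-flat inverse Kazhdan--Lusztig polynomial cancel so that the coefficient of both $t^i$ and $t^{k-i}$ reduces to the single induced representation $\Ind_{\s_i \times \s_{n-i}}^{\s_n}(V_{(1^i)}\otimes V_{(n-k+1,1^{k-i-1})})$. I expect this to be handled either by a sign-reversing involution on pairs (flat orbit, hook appearing in the expansion) or by a direct branching-rule identity that converts an inner trivial representation into a sign representation via partition conjugation; the degree bounds $i \le \lfloor k/2 \rfloor$ and $i \le \lfloor (k-1)/2 \rfloor$ should then emerge from the vanishing condition $V_\lambda=0$ when $\lambda$ fails to be a valid partition. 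As a sanity check, taking dimensions should recover the known non-equivariant formula for $Y_{\U_{k,n}}(t)$, while the palindromicity of Proposition~\ref{equ-inv-z-pro} is visible from the symmetric pairing of the two sums in~\eqref{eq-ekl-uniform}.
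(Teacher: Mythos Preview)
Your overall outline matches the paper's approach (expand \eqref{YM} over the $k+1$ orbits of flats, identify stabilizers, restrictions, and contractions), but there is a genuine error and a missing idea.

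The error is the claim that $Q^{\s_i}_{\U_{i,i}}(t)=V_{(i)}$. The equivariant inverse Kazhdan--Lusztig polynomial of the Boolean matroid is the \emph{sign} representation, $Q^{\s_i}_{B_i}(t)=V_{(1^i)}$; this is the $n=k$ specialization of the known formula $Q^{\s_n}_{\U_{k,n}}(t)=\sum_i V_{(n-k+1,2^i,1^{k-2i-1})}t^i$. With $V_{(1^i)}$ in hand and $\mu^{\s_{n-i}}_{\U_{k-i,n-i}}=(-1)^{k-i}V_{(n-k+1,1^{k-i-1})}$ (a single signed hook, not a combination), the signs $(-1)^{k}\cdot(-1)^{i}\cdot(-1)^{k-i}$ cancel on the nose and the $i$-th orbit contributes exactly $\Ind_{\s_i\times\s_{n-i}}^{\s_n}\bigl(V_{(1^i)}\otimes V_{(n-k+1,1^{k-i-1})}\bigr)\,t^{k-i}$. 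No sign-reversing involution and no ``trivial-to-sign'' branching identity is needed or available; the $V_{(1^i)}$ is already there.

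The missing idea is how to pass from this to \eqref{eq-ekl-uniform}. After the substitution you obtain
\[
Y_{\U_{k,n}}^{\s_n}(t)=\sum_{i=0}^{k-1}\Ind_{\s_i\times\s_{n-i}}^{\s_n}\bigl(V_{(1^i)}\otimes V_{(n-k+1,1^{k-i-1})}\bigr)\,t^{k-i}+Q^{\s_n}_{\U_{k,n}}(t),
\]
where the last term has degree at most $\lfloor(k-1)/2\rfloor$. The paper does not cancel anything here; instead it invokes the palindromicity of $Y^{\s_n}_{\U_{k,n}}(t)$ (Proposition~\ref{equ-inv-z-pro}). Since $\lfloor k/2\rfloor<k-\lfloor(k-1)/2\rfloor$, the coefficient of $t^{k-i}$ for $0\le i\le\lfloor k/2\rfloor$ is given purely by the first sum, and palindromicity then forces the coefficient of $t^i$ to equal it. This yields both halves of \eqref{eq-ekl-uniform} simultaneously, with the degree restrictions falling out of the range in which the $Q$-term can contribute rather than from any partition-validity argument.
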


With these explicit formulas established, we study the unimodality and log-concavity for the coefficients of the equivariant inverse 
$Z$-polynomials.
Let $\{C_i\}_{i=0}^n$ be a sequence of virtual representations of a finite group $W$.
We say that this sequence is equivariantly unimodal if there exists an index $i$ such that
\begin{align*}
 C_0 \le_W C_1 \le_W \cdots \le_W C_i \ge_W \cdots \ge_W C_n,
\end{align*}
where $A \le_W B$ means that $B-A \in \mathrm{Rep}(W)$.
The sequence is said to be equivariantly log-concave if
\begin{align*}
 C_i \otimes C_i - C_{i-1} \otimes C_{i+1} \in \mathrm{Rep}(W) \qquad \text{for~ all~} 1 \le i \le n-1.
\end{align*}
Applying the dimension map to these inequalities yields the unimodality and log-concavity of the sequence $\{\dim C_i\}_{i=0}^n$.
In the non-equivariant setting, log-concavity implies unimodality, whereas this implication fails in the equivariant setting.
For example, the equivariant Kazhdan--Lusztig polynomial of the uniform matroid $\U_{4,5}$ is given by $ P^{\s_{5}}_{\U_{4,5}}(t) = V_{(5)} + V_{(3,2)} t $
, see \cite[Theorem 3.1]{gedeon2017equivariant}.
This polynomial is not equivariantly unimodal, although it is shown in
\cite[Remark~5.5]{gedeon2017equivariant} to be equivariantly log-concave.
Hence, equivariant unimodality is a property of independent interest.

Equivariant unimodality has attracted considerable attention.
Angarone, Nathanson, and Reiner~\cite{angarone2025chow} showed that the graded pieces of the Chow ring of a matroid form an equivariantly unimodal sequence.
Gui and Xiong~\cite{gui2024equivariant} highlighted the role of
equivariant unimodality in the study of equivariant log-concavity
conjectures.
In equivariant Ehrhart theory, Elia, Kim, and Supina~\cite[Question~4.10]{elia2024techniques} asked which polytopes exhibit equivariant unimodality for their $h^\ast$-vectors. Braden, Huh, Matherne, Proudfoot, and Wang~\cite{braden2020singular} proved that the coefficients of the equivariant $Z$-polynomial are equivariantly unimodal.
This result motivates the following conjecture for the equivariant
inverse $Z$-polynomial.

\begin{conjecture}\label{conj-unimodal}
For any matroid $\M$ equipped with equipped with an action of a finite group $W$, the coefficients of its equivariant inverse $Z$–polynomial are equivariantly unimodal.
\end{conjecture}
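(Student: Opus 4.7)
The plan is to mimic the approach that established equivariant unimodality for the (non-inverse) equivariant $Z$-polynomial in Braden, Huh, Matherne, Proudfoot, and Wang~\cite{braden2020singular}, where unimodality came from a hard Lefschetz isomorphism on a $W$-equivariant cohomology module. The first step would be to identify a graded $W$-representation $H^{\bullet} = H^{\bullet}(\M)$ whose equivariant Hilbert series coincides with $Y^{W}_{\M}(t)$; that is, whose $i$-th graded piece $H^{i}$ is isomorphic to the $i$-th coefficient of $Y^{W}_{\M}(t)$. Since Ferroni, Matherne, Stevens, and Vecchi~\cite{ferroni2024hilbert} introduced $Y_{\M}(t)$ within the Kazhdan--Lusztig--Stanley framework as the Hilbert series of a certain module, one expects a natural $W$-equivariant refinement of that construction whenever $W$ acts on $\M$.

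Granting such a module, the second step would be to construct a degree-one $W$-equivariant operator $L\colon H^{\bullet}\to H^{\bullet+1}$ (a Lefschetz operator) and to prove that for each $i \le \rk(\M)/2$ the map
\[
L^{\rk(\M)-2i} \colon H^{i} \longrightarrow H^{\rk(\M)-i}
\]
is an isomorphism of $W$-representations. Setting $P^{i} := \ker\bigl(L^{\rk(\M)-2i+1}|_{H^{i}}\bigr)$ for $i\le \rk(\M)/2$ and $P^{i} := 0$ otherwise, such a Lefschetz isomorphism yields a primitive decomposition
\[
H^{i} \;\simeq\; \bigoplus_{j\ge 0} L^{j}\bigl(P^{i-2j}\bigr) \qquad \text{as } W\text{-representations},
\]
in which each primitive summand $P^{k}$ is an honest $W$-representation (consistent with Proposition~\ref{equ-inv-z-hone}). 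From this decomposition one reads off $H^{i-1} \le_{W} H^{i}$ for $i \le \rk(\M)/2$, and palindromicity (Proposition~\ref{equ-inv-z-pro}) then supplies the remaining inequalities, establishing the conjecture.

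The main obstacle is the construction of $H^{\bullet}$ and $L$. In the $Z$-polynomial case these arise from the intersection complex of the matroid Schubert variety and its Hodge-theoretic Lefschetz action, whereas for the inverse $Z$-polynomial the correct geometric or algebraic object is considerably less clear; the appropriate analog may involve a costalk, an augmentation-type quotient, or a module built directly from the inverse Kazhdan--Lusztig--Stanley operation. Even granting such a module, proving an equivariant hard Lefschetz statement outside the realizable setting would require extending the combinatorial Hodge-theoretic techniques of~\cite{braden2020singular} to this new context.

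As a complementary route, one could attempt an induction using the deletion formula of Braden, Ferroni, Matherne, and Nepal~\cite{braden2025invkldeletion}, peeling off one element $e \in E$ at a time and inducting on $|E|$. The difficulty here is that equivariant unimodality is not preserved under induction from a stabilizer $W_{e}$ to $W$, nor under sums of induced pieces, so further positivity input would be needed to combine the contributions coming from $\M\setminus e$ and $\M/e$. The uniform, $q$-niform, and paving matroid cases treated in this paper should provide useful test cases both for any candidate module interpretation and for any attempted inductive argument.
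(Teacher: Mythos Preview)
The statement you are attempting to prove is labeled a \emph{Conjecture} in the paper, and the paper does not prove it in general. The paper establishes only the special case of uniform matroids (Theorem~\ref{thm-invz-uni-unimodal}), by explicitly computing the irreducible decomposition of each coefficient of $Y_{\U_{k,n}}^{\mathfrak{S}_n}(t)$ via Proposition~\ref{prop-equi-uni} and checking directly that consecutive differences are honest representations. The $q$-niform case then follows from the Comparison Theorem. There is no general argument in the paper to compare your proposal against.

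Your proposal, for its part, is not a proof but a research outline, and you say so yourself: the key step is to produce a graded $W$-module $H^\bullet$ with equivariant Hilbert series $Y^W_{\M}(t)$ together with a $W$-equivariant hard Lefschetz operator, and you explicitly acknowledge that neither the module nor the Lefschetz structure is currently available. This is an honest assessment, and the strategy is a natural one by analogy with the $Z$-polynomial case, but as written it does not close the gap. The alternative deletion-induction approach you sketch has the same status: you correctly identify that equivariant unimodality is not preserved under the relevant operations, so no proof results. In short, the conjecture remains open both in the paper and in your proposal; what you have written is a reasonable plan of attack, not a proof.
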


We confirm this conjecture for uniform matroids.

\begin{theorem}\label{thm-invz-uni-unimodal}
Let $n\ge k\ge 1$ be integers. The coefficients of the equivariant inverse $Z$-polynomial of $\s_n \curvearrowright \U_{k,n}$ are equivariantly unimodal.
\end{theorem}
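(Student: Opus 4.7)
Palindromicity (Proposition~\ref{equ-inv-z-pro}) together with the explicit coefficient formula of Theorem~\ref{thm-equi-uni} reduces the problem to establishing the ascending chain
\[C_0 \le_{\s_n} C_1 \le_{\s_n} \cdots \le_{\s_n} C_{\lfloor k/2 \rfloor},\]
where $C_i := \Ind_{\s_i \times \s_{n-i}}^{\s_n}\!\bigl(V_{(1^i)} \otimes V_{(n-k+1,\, 1^{k-i-1})}\bigr)$. The plan is to decompose each $C_i$ into irreducibles by Pieri's rule and then read off $C_{i+1} - C_i$ directly.

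Passing to the Frobenius characteristic, $\ch(C_i) = e_i \cdot s_{(n-k+1,\, 1^{k-i-1})}$, so by Pieri's rule $C_i$ decomposes as the direct sum of $V_\lambda$ over $\lambda \vdash n$ such that $\lambda / (n-k+1, 1^{k-i-1})$ is a vertical strip of size $i$. A direct enumeration over this hook shape shows that the partitions $\lambda$ appearing are precisely those of the form
\[\lambda = (n-k+1+\epsilon,\ 2^p,\ 1^{k-1-\epsilon-2p}), \qquad \epsilon \in \{0,1\},\ \ 0 \le p \le \min(k-i-1,\, i-\epsilon),\]
adopting the convention that $V_\lambda := 0$ whenever the tuple fails to be an honest partition. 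This convention absorbs the degenerate case $n = k$, where $\lambda_1 = 1$ forbids $p \ge 1$ in the $\epsilon = 0$ branch.

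Comparing the parametrizations for $C_i$ and $C_{i+1}$, each of the two $\epsilon$-ranges of $p$ picks up exactly one additional value, so all other terms cancel and
\[C_{i+1} - C_i \;=\; V_{(n-k+1,\, 2^{i+1},\, 1^{k-3-2i})} \;+\; V_{(n-k+2,\, 2^i,\, 1^{k-2-2i})}.\]
Each surviving summand is an irreducible representation of $\s_n$ (the other vanishes under the zero-convention when its shape is not a partition), so $C_{i+1} - C_i \in \mathrm{Rep}(\s_n)$, and the desired chain follows.

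The main technical point is the accurate Pieri bookkeeping at two boundary regimes: $i+1 = \lfloor k/2 \rfloor$ with $k$ even (where the first summand of the difference can degenerate) and the degenerate matroid case $n = k$ (where the first summand collapses for every $i$). Both regimes are handled uniformly by the $V_\lambda := 0$ convention and by a careful treatment of the upper bound $\min(k-i-1,\, i-\epsilon)$ on $p$.
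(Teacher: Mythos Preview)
Your proof is correct and follows essentially the same route as the paper. The paper first records the Pieri decomposition as a separate result (Proposition~\ref{prop-equi-uni}), obtaining $[t^i]Y^{\s_n}_{\U_{k,n}}(t)=\sum_{x=0}^{i}\bigl(V_{(n-k+1,2^x,1^{k-2x-1})}+V_{(n-k+2,2^{x-1},1^{k-2x})}\bigr)$, and then subtracts consecutive coefficients; you carry out the same Pieri bookkeeping inline and arrive at the identical difference $V_{(n-k+1,2^{i+1},1^{k-2i-3})}+V_{(n-k+2,2^{i},1^{k-2i-2})}$, with the same boundary cases absorbed by the $V_\lambda=0$ convention.
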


A stronger property, called strong equivariant log-concavity, was introduced in \cite{gedeon2017equivariant}.
A sequence $\{C_i\}_{i=0}^n$ of virtual representations of $W$
is said to be strongly equivariantly log-concave if
\begin{align*}
 C_i \otimes C_j - C_{i-k} \otimes C_{j+k} \in \mathrm{Rep}(W), \qquad \text{for~ all~}1\le k\le i\le j \le n-1.
\end{align*}
Proudfoot, Xu, and Young~\cite{proudfoot2018z} conjectured that the coefficients of the equivariant $Z$-polynomials are strongly equivariantly log-concave.
Motivated by this conjecture, we propose the following analogue for
equivariant inverse $Z$-polynomials.

\begin{conjecture}\label{conj-log}
For any matroid $\M$ equipped with an action of a finite group $W$, the coefficients of the equivariant inverse $Z$–polynomial are strongly equivariantly log-concave.
\end{conjecture}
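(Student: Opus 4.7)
The plan is to attack Conjecture \ref{conj-log} in stages, moving from the explicit families for which formulas are available in this paper to the fully general case.

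\textbf{Step 1: Uniform matroids.} For $\s_n \curvearrowright \U_{k,n}$, the coefficients $C_i$ given by Theorem \ref{thm-equi-uni} are explicit inductions of tensor products of a sign character with a hook-shaped Specht module. I would first expand each $C_i$ in the basis of Specht modules using the Littlewood--Richardson (equivalently Pieri) rule, so that the resulting multiplicities admit an explicit hook-shape description. To verify $C_i \otimes C_j - C_{i-\ell} \otimes C_{j+\ell} \in \mathrm{Rep}(\s_n)$, I would then compare multiplicities of each irreducible $V_\lambda$ on both sides, using Mackey's formula to handle products of induced representations. The palindromic identity $C_i = C_{k-i}$ from Proposition~\ref{equ-inv-z-pro} cuts the number of inequalities in half. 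The goal is a uniform combinatorial (ideally injective) argument producing a nonnegative expression for each multiplicity difference.

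\textbf{Step 2: $q$-niform matroids.} Once Step 1 is in place, the Comparison Theorem for unipotent representations invoked in the paper transfers the inequalities to the $\mathrm{GL}$-equivariant setting, since the branching and tensor-product multiplicities of unipotent representations match those of the symmetric group.

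\textbf{Step 3: Paving matroids.} Using the relation between an equivariant paving matroid and its uniform relaxation stated in the paper, the coefficients of $Y^W_\M(t)$ differ from the uniform ones by a correction term induced from stabilizers of the relaxed flats. I would try to show that this correction preserves strong equivariant log-concavity, either by a direct calculation in $\mathrm{Rep}(W)$ or by exhibiting equivariant embeddings that realize each required difference as an honest submodule.

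\textbf{Step 4: General matroids.} The principal obstacle is that no closed formula for $Y^W_\M(t)$ is available for arbitrary $W \curvearrowright \M$. The most promising route is cohomological: extend the intersection-cohomology framework of Braden, Huh, Matherne, Proudfoot, and Wang, together with the inverse Kazhdan--Lusztig--Stanley setup of Ferroni, Matherne, Stevens, and Vecchi, to construct an equivariant graded module whose Hilbert series in $\mathrm{grVRep}(W)$ equals $Y^W_\M(t)$ and which supports an equivariant Hard-Lefschetz structure with Hodge--Riemann relations compatible with the $W$-action. Strong equivariant log-concavity would then follow from equivariant Khovanskii--Teissier-type inequalities, parallel to the argument underlying the $Z$-polynomial case. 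Producing this module -- an equivariant geometric realization of the inverse $Z$-polynomial together with a workable Hodge theory -- is where genuinely new input will be needed, and this is the step I expect to be the hardest.
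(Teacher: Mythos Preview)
This statement is a \emph{conjecture} in the paper, not a theorem; the paper does not prove it. What the paper establishes instead is the weaker property of \emph{strongly induced} log-concavity for uniform and $q$-niform matroids (Theorems~\ref{thm-invz-uni-unimodal-log} and~\ref{equi-q-ni-log}), precisely because strong \emph{equivariant} log-concavity for $W=\mathfrak{S}_n$ requires controlling Kronecker coefficients, an obstacle the paper explicitly names and sidesteps.

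Your plan has genuine gaps at each stage. In Step~1, the condition $C_i\otimes C_j - C_{i-\ell}\otimes C_{j+\ell}\in\mathrm{Rep}(\mathfrak{S}_n)$ involves the \emph{inner} tensor product of $\mathfrak{S}_n$-representations; Pieri and Mackey decompose each $C_i$ into irreducibles, but you are then left with sums of Kronecker products $V_\lambda\otimes V_\mu$, and there is no known combinatorial or injective argument for the required positivity even when the factors are hook-shaped. In Step~2, the Comparison Theorem matches Harish--Chandra induction with symmetric-group induction; it says nothing about ordinary tensor products of unipotent $\mathrm{GL}_n(\mathbb{F}_q)$-representations, which need not even be unipotent, so Step~1 would not transfer. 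In Step~3, Theorem~\ref{thm-paving} expresses $Y^W_{\M}(t)$ as $Y^W_{\U_{k,E}}(t)$ \emph{minus} induced correction terms; subtracting honest representations does not preserve log-concavity, so the property cannot simply be inherited from the uniform case. For Step~4, note that even for the equivariant $Z$-polynomial the analogous strong equivariant log-concavity is only conjectured in~\cite{proudfoot2018z}, so there is no existing Hodge-theoretic template to imitate.
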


When $W=\mathfrak{S}_n$, establishing strong equivariant log-concavity
requires decomposing the Kronecker products of Schur functions.
Such decompositions are difficult and a general
combinatorial interpretation of the Kronecker coefficients remains a central
open problem in algebraic combinatorics.
For this reason, we instead work with 
strongly induced log-concavity introduced by Gao
et al.~\cite{gao2023induced}.
If there exist a finite group $G$, a subgroup $H\le G$, and a
homomorphism $\phi\colon H\to W\times W$ such that
$$
\Ind_H^G\bigl(C_i\boxtimes C_j\bigr)
-
\Ind_H^G\bigl(C_{i-1}\boxtimes C_{j+1}\bigr)
\in \mathrm{Rep}(G),
\quad\text{for any } 1\le i\le j\leq n-1,
$$
where the external tensor product $C_i\boxtimes C_j$ is regarded as a
representation of $H$ via pullback along $\phi$, then the sequence
$\{C_i\}_{i= 0}^n$ is said to be strongly inductively log-concave with respect to
$G$, $H$, and $\phi$.
Applying the dimension map to these inequalities also yields the strong log-concavity of the sequence $\{\dim C_i\}_{i=0}^n$.

Based on Theorem \ref{thm-equi-uni}, we obtain the following result.

\begin{theorem}\label{thm-invz-uni-unimodal-log}
Let $n\ge k\ge 1$ be integers. The coefficients of the equivariant inverse $Z$-polynomial $Y_{\U_{k,n}}^{\s_n}(t)$ are strongly inductively log-concave.
\end{theorem}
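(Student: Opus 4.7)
The plan is to establish strong inductive log-concavity with respect to $G=\s_{2n}$, $H=\s_n\times\s_n$ embedded block-diagonally in $\s_{2n}$, and $\phi$ the identity map $H\to\s_n\times\s_n$. Set $m=n-k+1$, and for $0\le p\le\lfloor k/2\rfloor$ let $c_p:=\Ind_{\s_p\times\s_{n-p}}^{\s_n}(V_{(1^p)}\boxtimes V_{(m,1^{k-p-1})})$. By Theorem~\ref{thm-equi-uni} together with palindromicity, the coefficient $C_l$ of $t^l$ in $Y^{\s_n}_{\U_{k,n}}(t)$ equals $c_{\min(l,\,k-l)}$. Repeated application of the transitivity of induction then yields
$$\Ind_H^G(c_p\boxtimes c_q)=\Ind^{\s_{2n}}_{\s_{p+q}\times\s_{2n-p-q}}\!\bigl(A_{p,q}\boxtimes B_{p,q}\bigr),$$
where $A_{p,q}=\Ind^{\s_{p+q}}_{\s_p\times\s_q}(V_{(1^p)}\boxtimes V_{(1^q)})$ and $B_{p,q}=\Ind^{\s_{2n-p-q}}_{\s_{n-p}\times\s_{n-q}}(V_{(m,1^{k-p-1})}\boxtimes V_{(m,1^{k-q-1})})$.

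Next, I run a case analysis on pairs $1\le i\le j\le k-1$. Palindromicity together with the flip symmetry $\Ind_H^G(X\boxtimes Y)\cong\Ind_H^G(Y\boxtimes X)$ let me assume $i+j\le k$. If $j+1>\lfloor k/2\rfloor$, the palindromic identifications $C_{j+1}=c_{k-j-1}$ (and, when $j>\lfloor k/2\rfloor$, $C_j=c_{k-j}$) reduce the desired inequality either to
$$\Ind_H^G(c_p\boxtimes c_q)-\Ind_H^G(c_{p-1}\boxtimes c_{q-1})\in\mathrm{Rep}(\s_{2n})$$
or to the single-index variant $\Ind_H^G((c_p-c_{p-1})\boxtimes c_q)\in\mathrm{Rep}(\s_{2n})$ (which occurs only for $k$ odd, at $j=(k-1)/2$, where $C_j=C_{j+1}$). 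Using the decomposition $c_p\boxtimes c_q-c_{p-1}\boxtimes c_{q-1}=(c_p-c_{p-1})\boxtimes c_q+c_{p-1}\boxtimes(c_q-c_{q-1})$ together with the equivariant unimodality of $\{c_l\}$ established in Theorem~\ref{thm-invz-uni-unimodal}, each summand on the right is an external tensor of honest representations, and inducing to $\s_{2n}$ preserves honesty. This settles all boundary cases, reducing the problem to the core regime $1\le i\le j$ with $j+1\le\lfloor k/2\rfloor$.

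In this regime, the telescoping identity
$$A_{i,j}\boxtimes B_{i,j}-A_{i-1,j+1}\boxtimes B_{i-1,j+1}=(A_{i,j}-A_{i-1,j+1})\boxtimes B_{i,j}+A_{i-1,j+1}\boxtimes(B_{i,j}-B_{i-1,j+1})$$
reduces the task to the two Schur positivities $A_{i,j}-A_{i-1,j+1}\in\mathrm{Rep}(\s_{i+j})$ and $B_{i,j}-B_{i-1,j+1}\in\mathrm{Rep}(\s_{2n-i-j})$. Under the Frobenius characteristic, with $a=k-i-1\ge b=k-j-1\ge 0$, these become Schur positivity of $e_ie_j-e_{i-1}e_{j+1}$ and of
$$s_{(m,1^a)}\,s_{(m,1^b)}-s_{(m,1^{a+1})}\,s_{(m,1^{b-1})},$$
respectively. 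The first collapses by a one-line Pieri computation to the single Schur function $s_{(2^i,1^{j-i})}$.

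The hard part is the hook-product Schur positivity. I plan to establish it by expanding each hook Schur function through the identity $s_{(m,1^c)}=\sum_{r=0}^{c}(-1)^r h_{m+r}e_{c-r}$, multiplying out, and regrouping pairs $h_{m+r}h_{m+s}$ via the two-row Jacobi--Trudi relation $h_ph_q-h_{p+1}h_{q-1}=s_{(p,q)}$. After the cancellations the difference should reorganize as an explicit nonnegative combination of products of the form $s_{(p,q)}\cdot(e_\alpha e_\beta-e_{\alpha-1}e_{\beta+1})$, each of which is Schur positive by the same Pieri computation used above. An alternative route is the Littlewood--Richardson rule for products of two hooks, through which the coefficient-wise inequality $c^\nu_{(m,1^a),(m,1^b)}\ge c^\nu_{(m,1^{a+1}),(m,1^{b-1})}$ is verified by a sign-reversing involution on LR tableaux. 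The main obstacle is the bookkeeping of cancellations in either approach, but in both the outcome is a manifestly nonnegative integer combination of Schur functions, completing the proof.
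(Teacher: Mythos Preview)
Your approach is essentially the paper's: both reduce via Frobenius characteristic to Schur positivity of
\[
e_ie_j\,s_{(m,1^{k-i-1})}s_{(m,1^{k-j-1})}-e_{i-1}e_{j+1}\,s_{(m,1^{k-i})}s_{(m,1^{k-j-2})},
\]
telescope it into the two pieces $e_ie_j-e_{i-1}e_{j+1}\ge_s 0$ and $s_{(m,1^a)}s_{(m,1^b)}-s_{(m,1^{a+1})}s_{(m,1^{b-1})}\ge_s 0$, and handle the boundary cases near the middle via the unimodality $c_p-c_{p-1}\in\mathrm{Rep}(\s_n)$ (the paper records this as Lemma~\ref{lemma-strong-induced}\,(1), which is equivalent to Theorem~\ref{thm-invz-uni-unimodal}). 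Your regrouping through $\Ind^{\s_{2n}}_{\s_{i+j}\times\s_{2n-i-j}}(A_{i,j}\boxtimes B_{i,j})$ is just the representation-theoretic restatement of the same four-factor Schur product.

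The one incomplete step is your ``hard part'': the hook-product Schur positivity $s_{(m,1^a)}s_{(m,1^b)}\ge_s s_{(m,1^{a+1})}s_{(m,1^{b-1})}$ for $a\ge b\ge 1$, which you leave as a plan. There is no need to redo this via Jacobi--Trudi expansions or a sign-reversing involution on LR tableaux: it is a direct instance of \cite[Theorem~3.1]{bergeron2006inequalities}, recorded in the paper as Lemma~\ref{lem-schur-log-1}. Taking $m=j$ and $(u,v)=(b-1,a+1)$ in part~(1) gives the inequality immediately. The paper invokes this same lemma for both Schur-positivity pieces, so once you cite it your argument is complete and coincides with the paper's.
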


We also consider the $q$-niform matroid $\U_{k,n}(q)$ equipped with the natural action of the general linear group $\mathrm{GL}_n(\mathbb{F}_q)$. Using the Comparison Theorem for unipotent representations, we prove the corresponding results for $q$-niform matroid in Subsection \ref{Q-niform}, which are in parallel with those for uniform matroid.

To compute the equivariant inverse $Z$-polynomials of arbitrary equivariant paving
matroids, we use the operation of relaxation.
For a matroid $\M=(E,\mathscr{B})$ of size $n$ and rank $k$,
		a subset $I \subseteq E$ is independent if it is contained
		in some basis $B \in \mathscr{B}$, and dependent otherwise.
		A circuit is a minimal dependent set.
		Following Ferroni, Nasr, and Vecchi~\cite{ferroni2023stressed}, a stressed hyperplane $H$ of $\M$ is a flat of rank $k-1$ in which every $k$-subset is a circuit.
		The relaxation of $\M$ along $H$ is obtained by adjoining
		to the set of bases all $k$-subsets of $H$.
	Ferroni, Nasr, and Vecchi~\cite{ferroni2023stressed} proved that a matroid is paving if and only if relaxing all its stressed
	hyperplanes of size at least $k$ yields the uniform matroid $\U_{k,n}$.
	
 In the equivariant setting, Karn, Nasr, Proudfoot, and Vecchi~\cite{karn2023equivariant} relaxed entire $W$-orbits
	of stressed hyperplanes simultaneously. Building
	on the stressed hyperplane analysis of Ferroni, Nasr, and Vecchi~\cite{ferroni2023stressed}, they derived explicit relaxation formulas for the equivariant Kazhdan--Lusztig polynomial
	and the equivariant inverse Kazhdan--Lusztig polynomial.
	They also proved the corresponding statement for the equivariant $Z$-polynomial, but did not provide an explicit formula since the resulting expression is less elegant.
	This makes it possible to transport formulas for equivariant uniform matroids to
	explicit expressions for equivariant paving matroids.

We prove the
analogous relaxation formula for equivariant inverse $Z$-polynomials, which is stated in the following
theorem.

\begin{theorem}\label{thm-paving}
Let $\M$ be a matroid of rank $k$ with an action
of a finite group $W$, and let $H$ be any stressed
hyperplane of size $h$ with stabilizer $W_H \subseteq W$.
If $\widetilde{\M}$ is obtained by simultaneously
relaxing the entire $W$–orbit of $H$, then
\begin{align} \label{eq-paving-thm}
Y^{W}_{\widetilde{\M}}(t) - Y^{W}_{\M}(t)
= \mathrm{Ind}^{W}_{W_H} \, \mathrm{Res}^{\s_h}_{W_H} \,
\big(Y^{\s_h}_{\U_{k,h}}(t)
- \frac{1+(-1)^k}{2} V_{(h-k+2,2^{\frac{k}{2} -1})} t^{\frac{k}{2}} \big),
\end{align}
where $\mathrm{Res}^{\s_h}_{W_H}$ denotes the functor pulling back a representation along the natural homomorphism $W_H \to \s_h$.
\end{theorem}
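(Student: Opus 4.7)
My plan is to derive the formula by expanding both $Y^W_{\widetilde{\M}}(t)$ and $Y^W_\M(t)$ via the defining formula~\eqref{YM}, comparing the two sums orbit-by-orbit over the lattice of flats, and isolating the net contribution of the $W$-orbit of $H$.

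First, I would localize the difference to the orbit of $H$. Using the description of how a relaxation alters the lattice of flats (Ferroni, Nasr, and Vecchi in the non-equivariant case; Karn, Nasr, Proudfoot, and Vecchi equivariantly), only flats that lie in, coincide with, or are covered by some $W$-translate of $H$ behave differently in $\mathcal{L}(\M)$ versus $\mathcal{L}(\widetilde{\M})$; all other $W$-orbits contribute identical terms that cancel in the difference. Summing over the single $W$-orbit of $H$ is, by Frobenius reciprocity applied to the orbit sum, an induction from the stabilizer $W_H$, which already forces the right-hand side of~\eqref{eq-paving-thm} to take the form $\Ind^{W}_{W_H}(\cdots)$.

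Next, I would evaluate the $W_H$-level contribution. Because $H$ is a stressed hyperplane, for every affected flat $F$ the localizations $\M|_F$ and contractions $\M/F$ appearing in~\eqref{YM} are close to uniform matroids supported on $H$, and the relevant equivariant action factors through the natural homomorphism $W_H \to \s_h$. I would substitute the equivariant relaxation formulas of Karn et al.\ for the inverse Kazhdan--Lusztig polynomial $Q^{W_F}_{\M|_F}$ and for the M\"obius invariant $\mu^{W_F}_{\M/F}$, cancel the matching terms, and reorganize the surviving sum so that the inner $\s_h$-expression agrees with~\eqref{YM} applied to $\s_h \curvearrowright \U_{k,h}$. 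This rewriting is exactly the pullback $\Res^{\s_h}_{W_H}$ appearing in the statement, and thus naturally produces the uniform-matroid polynomial $Y^{\s_h}_{\U_{k,h}}(t)$ up to a correction supported in a single degree.

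Finally, I would pin down the correction term. For $k$ odd, the identification in the previous step is exact because the two sums in Theorem~\ref{thm-equi-uni} do not overlap. For $k$ even, they coincide at $t^{k/2}$, so the rewriting over-counts the middle coefficient by precisely the irreducible associated with the new basis orbit produced by relaxation; explicit comparison with Theorem~\ref{thm-equi-uni} identifies this irreducible as $V_{(h-k+2,2^{\frac{k}{2}-1})}$. The main obstacle will be precisely this last piece of bookkeeping: one must align inductions and restrictions of specific $\s_h$-irreducibles coming from the two sides of the relaxation and verify that the resulting difference is an honest representation, consistent with Proposition~\ref{equ-inv-z-hone}. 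A useful sanity check is to apply $\dim$ and recover the non-equivariant relaxation formula of Braden, Ferroni, Matherne, and Nepal for the inverse $Z$-polynomial; the representation-theoretic lift is then forced by the rigidity of irreducible $\s_h$-decompositions.
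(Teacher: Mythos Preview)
Your first step---localizing the difference to the $W$-orbit of $H$ and extracting a factor of $\Ind^W_{W_H}$---is on the right track and matches the paper's Theorem~\ref{pro-relaxM-M}, though be aware this step alone is substantial: the paper splits $Y^W_\M(t)=R^W_\M(t)+Q^W_\M(t)$, analyzes three separate flat-contributions (the hyperplane $H$, the new corank-one flats $\mathcal A$, and the remaining flats), and uses a nontrivial orbit correspondence (Lemma~\ref{lem-paving-1}) plus the Orlik--Solomon relaxation identity for the M\"obius invariant to show the inner expression depends only on $(k,h)$ and factors through $\Res^{\s_h}_{W_H}$.

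The gap is in your second and third steps. You claim the contractions $\M/F$ for affected flats $F$ are ``close to uniform matroids supported on $H$,'' but this is not true: $\M/F$ lives on $E\setminus F$, which contains many elements outside $H$, and its structure depends on $\M$ globally. There is no direct flat-by-flat reorganization that matches the defining sum~\eqref{YM} for $\U_{k,h}$, because the flats of $\U_{k,h}$ (subsets of size $\le k-1$ plus the full ground set of size $h$) do not correspond to flats of either $\M$ or $\widetilde{\M}$ inside $H$ in a way that respects the contractions. Consequently your plan to ``reorganize the surviving sum'' so that it literally becomes~\eqref{YM} applied to $\U_{k,h}$ is not a well-defined procedure.

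The paper bypasses this by a different idea you are missing: once Theorem~\ref{pro-relaxM-M} shows the inner polynomial $y^{\s_h}_{k,h}(t)$ is \emph{universal} (depending only on $k,h$, not on $\M$), one computes it by specializing to the model matroid $M_{k,h}=\U_{k-1,h}\oplus B_1$ with the $\s_h$-action on the first factor. Its unique stressed hyperplane of size $h$ relaxes to $\U_{k,h+1}$, so
\[
y^{\s_h}_{k,h}(t)
=\Res^{\s_{h+1}}_{\s_h}Y^{\s_{h+1}}_{\U_{k,h+1}}(t)
-Y^{\s_h}_{\U_{k-1,h}\oplus B_1}(t),
\]
and both terms are computable from the explicit uniform formula in Proposition~\ref{prop-equi-uni} (using multiplicativity of $Y$ under direct sums and the branching rule for $\Res^{\s_{h+1}}_{\s_h}$). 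The correction $V_{(h-k+2,2^{k/2-1})}t^{k/2}$ then drops out of this explicit comparison, not from an abstract ``over-counting'' argument tied to new basis orbits.
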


Taking the dimensions of the representations in
\eqref{eq-paving-thm} yields the following expression
for the ordinary inverse $Z$-polynomial $Y_{\M}(t)$ of paving matroids.
This formula
differs in form from the one previously obtained in \cite[Proposition~5.3]{gao2025inverse}.

\begin{theorem}\label{cor-paving-or-z}
Let $\M$ be a paving matroid of rank $k$ and cardinality $n$. Then
\begin{align*}
 Y_{\M}(t)=Y_{\U_{k,n}}(t)-\sum_{h \ge k}\lambda_h \Big(Y_{\U_{k,h}}(t)- \frac{2h\big(1+(-1)^k\big)}{(2h-k)(2h-k+2)} \dbinom{h-1}{\frac{k}{2},\frac{k}{2}-1,h-k}t^{\frac{k}{2}} \Big),
\end{align*}
where $\lambda_h$ denotes the number of stressed hyperplanes of size $h$ in $\M$.
\end{theorem}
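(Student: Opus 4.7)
The plan is to specialize Theorem~\ref{thm-paving} to dimensions and iterate the relaxation process. First, I would apply Theorem~\ref{thm-paving} with the trivial group $W=\{1\}$: then $W_H=\{1\}$, the induction/restriction functors collapse to taking dimensions of the $\mathfrak{S}_h$-representations on the right, and the identity reduces to the one-step dimensional formula
\[
Y_{\widetilde{\M}}(t)-Y_{\M}(t) \;=\; Y_{\U_{k,h}}(t)\;-\;\tfrac{1+(-1)^k}{2}\,\dim V_{(h-k+2,\,2^{k/2-1})}\,t^{k/2},
\]
valid for any rank-$k$ matroid and any stressed hyperplane $H$ of size $h$ relaxed to produce $\widetilde{\M}$.

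Second, by the characterization of paving matroids due to Ferroni, Nasr, and Vecchi~\cite{ferroni2023stressed}, starting from $\M$ and successively relaxing all stressed hyperplanes of size $\ge k$ terminates in the uniform matroid $\U_{k,n}$. One can check that during this process the remaining stressed hyperplanes of $\M$ stay stressed, since in a paving matroid any two hyperplanes intersect in at most $k-2$ elements. I would therefore apply the one-step formula above along such a chain of relaxations; the left-hand sides telescope to $Y_{\U_{k,n}}(t)-Y_{\M}(t)$, and since each stressed hyperplane of size $h$ is relaxed exactly once, the right-hand side accumulates $\lambda_h$ copies of the bracketed expression for each $h$. Rearranging then produces the stated identity, modulo identifying $\dim V_{(h-k+2,\,2^{k/2-1})}$ with the explicit rational expression in the theorem.

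Finally, for the dimension computation (only relevant when $k=2m$ is even, since the prefactor $\tfrac{1+(-1)^k}{2}$ kills the odd case), I would apply the hook length formula to $\lambda=(h-2m+2,\,2^{m-1})\vdash h$. Reading the hook lengths row by row, the two corner cells of the first row contribute $(h-m+1)(h-m)$, the remaining first-row cells contribute $(h-2m)!$, and the $m-1$ length-two rows contribute a telescoping product that simplifies to $(m-1)!\,m!$. Dividing $h!$ by their product gives
\[
\dim V_{(h-k+2,\,2^{k/2-1})} \;=\; \frac{h!}{(h-m+1)(h-m)(h-2m)!(m-1)!\,m!},
\]
which rewrites as $\tfrac{2h}{(2h-k)(2h-k+2)}\binom{h-1}{k/2,\,k/2-1,\,h-k}$ and finishes the proof. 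The only real obstacle is this final hook-length bookkeeping, as the telescoping argument and the specialization of Theorem~\ref{thm-paving} to dimensions are entirely transparent once one notes that induction and restriction disappear under the dimension map.
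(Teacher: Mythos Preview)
Your approach is essentially identical to the paper's: specialize Theorem~\ref{thm-paving} to the trivial group to obtain the one-step dimensional formula, evaluate $\dim V_{(h-k+2,2^{k/2-1})}$ via the hook length formula, and then telescope over the chain of relaxations using the Ferroni--Nasr--Vecchi characterization of paving matroids. The only slip is in your final rewriting: your hook-length expression $\dfrac{h!}{(h-m+1)(h-m)(h-2m)!\,(m-1)!\,m!}$ is correct, but it equals $\dfrac{4h}{(2h-k)(2h-k+2)}\binom{h-1}{k/2,\,k/2-1,\,h-k}$, not $\dfrac{2h}{\cdots}$; this is exactly what is needed once you multiply by the prefactor $\tfrac{1+(-1)^k}{2}$ to recover the coefficient $\dfrac{2h(1+(-1)^k)}{(2h-k)(2h-k+2)}$ in the statement.
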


An unpublished conjecture of Gedeon asserts that the coefficients of the
equivariant Kazhdan--Lusztig polynomial of a matroid $\M$
are bounded above by those of the equivariant
Kazhdan--Lusztig polynomial of the uniform matroid
of the same rank on the same ground set.
A precise formulation of this conjecture is given in
\cite[Conjecture~1.6]{karn2023equivariant},
where it is shown to hold when $\M$ is paving.
The same work also establishes an analogous statement
for equivariant inverse Kazhdan--Lusztig polynomials.
Here we show that this property extends to
equivariant inverse $Z$-polynomials.

\begin{theorem}\label{thm-uni-bigest}
Let $\M$ be a paving matroid of rank $k$ on the ground set $E$, and let $W$ be a finite group
acting on $E$ and preserving $\M$.
Let $\U_{k,E}$ denote the uniform matroid of rank $k$ on $E$.
Then $Y^{W}_{\M}(t)$ is coefficientwise bounded above by $Y^{W}_{\U_{k,E}}(t)$. That is, the coefficients of 
$Y^{W}_{\U_{k,E}}(t)-Y^{W}_{\M}(t)$ are honest representations of $W$.

\end{theorem}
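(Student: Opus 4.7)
The plan is to interpret $Y^{W}_{\U_{k,E}}(t) - Y^{W}_{\M}(t)$ as a telescoping sum of relaxation differences and show that each term is an honest representation. Since $\M$ is paving, the Ferroni--Nasr--Vecchi characterization lets us enumerate the $W$-orbits of stressed hyperplanes of size at least $k$ and relax them one orbit at a time, producing a chain of equivariant paving matroids $\M = \M_{0}, \M_{1}, \dots, \M_{r} = \U_{k,E}$. Writing
\begin{align*}
Y^{W}_{\U_{k,E}}(t) - Y^{W}_{\M}(t) = \sum_{j=1}^{r}\bigl(Y^{W}_{\M_{j}}(t) - Y^{W}_{\M_{j-1}}(t)\bigr),
\end{align*}
it suffices to prove that every coefficient of every telescoping term lies in $\mathrm{Rep}(W)$.

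By Theorem~\ref{thm-paving}, each term equals
\begin{align*}
\Ind^{W}_{W_{H}}\,\Res^{\s_{h}}_{W_{H}}\!\Bigl(Y^{\s_{h}}_{\U_{k,h}}(t) - \tfrac{1+(-1)^{k}}{2}\, V_{(h-k+2,2^{k/2-1})}\, t^{k/2}\Bigr),
\end{align*}
where $H$ is a representative of the orbit being relaxed and $h=|H|$. Since induction from a subgroup and pullback along a group homomorphism both preserve honest representations, the task reduces to showing that the bracketed polynomial has coefficients in $\mathrm{Rep}(\s_{h})$.

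For odd $k$ the subtracted term vanishes, and the claim follows directly from Proposition~\ref{equ-inv-z-hone}. For even $k = 2m$, Theorem~\ref{thm-equi-uni} shows that only the $i = m$ summand of its first sum contributes to the middle coefficient of $Y^{\s_{h}}_{\U_{k,h}}(t)$, yielding
\begin{align*}
[t^{m}]\, Y^{\s_{h}}_{\U_{2m,h}}(t) = \Ind^{\s_{h}}_{\s_{m}\times\s_{h-m}}\!\bigl(V_{(1^{m})}\otimes V_{(h-2m+1,1^{m-1})}\bigr).
\end{align*}
Under the Frobenius characteristic this corresponds to the product $s_{(1^{m})}\cdot s_{(h-2m+1,1^{m-1})}$, which by Pieri's rule decomposes as $\sum_{\mu} s_{\mu}$ over $\mu$ such that $\mu/(h-2m+1,1^{m-1})$ is a vertical $m$-strip. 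The skew shape $(h-2m+2,2^{m-1})/(h-2m+1,1^{m-1})$ adds exactly one box to row $1$ and one box to each of rows $2,\dots,m$, hence is a vertical $m$-strip; so $V_{(h-k+2,2^{k/2-1})}$ appears with multiplicity one. Subtracting it leaves an honest representation.

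The main obstacle is the Pieri identification in the even-$k$ case: without locating this specific Schur summand inside the middle coefficient of the uniform polynomial, the subtraction in Theorem~\ref{thm-paving} could destroy positivity. All remaining steps are formal, namely the telescoping along the relaxation chain, the preservation of $\mathrm{Rep}$ under $\Ind$ and $\Res$, and the vanishing of the correction term when $k$ is odd.
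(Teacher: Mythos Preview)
Your proof is correct and follows essentially the same route as the paper: telescoping along a chain of $W$-orbit relaxations, invoking Theorem~\ref{thm-paving} for each step, and then checking that the $\mathfrak{S}_h$-polynomial inside the induction has honest coefficients. The only cosmetic difference is that the paper cites the explicit irreducible decomposition in Proposition~\ref{prop-equi-uni} to isolate the summand $V_{(h-k+2,2^{k/2-1})}$ in the middle coefficient, whereas you extract it directly via Pieri from the induced-representation form in Theorem~\ref{thm-equi-uni}; these are the same computation, since Proposition~\ref{prop-equi-uni} is itself proved by that Pieri step.
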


This paper is organized as follows.
In Section~\ref{section-equi-invz}, we establish two fundamental
properties of equivariant inverse $Z$-polynomials.
We prove that their coefficients are honest representations and that the
polynomials are palindromic.
In Section~\ref{Uniform matroids}, we derive explicit formulas for
the equivariant inverse $Z$-polynomials of uniform matroids equipped with the natural action of the symmetric group using
symmetric function theory. We then extend these results to
$q$-niform matroids equipped with the general linear
group.
As applications, we prove that these polynomials are equivariantly
unimodal and strongly inductively log-concave.
In Section~\ref{subse-paving}, we relate the equivariant inverse
$Z$-polynomials of a matroid $\M$ to those of its relaxation
$\widetilde{\M}$. This relation allows us to compute the equivariant
inverse $Z$-polynomials of any equivariant paving matroid.


\section{Equivariant inverse \texorpdfstring{$Z$-polynomials}{Z-polynomials}}\label{section-equi-invz}

In this section, we establish two fundamental properties of the equivariant
inverse $Z$-polynomials of matroids.

Braden, Huh, Matherne, Proudfoot, and Wang \cite{braden2020singular}
showed that if a matroid is equipped with an action of a finite group $W$, then the coefficients of its equivariant $Z$-polynomial are honest representations of $W$.
We prove that an analogous statement holds for the equivariant inverse $Z$-polynomials.

\begin{proposition}\label{equ-inv-z-hone}
For any equivariant matroid $W \curvearrowright \M$, the coefficients of $Y^{W}_{\M}(t)$ are honest representations of $W$.
\end{proposition}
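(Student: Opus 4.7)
My plan is to show that the signs in the defining formula~\eqref{YM} are arranged precisely to cancel the intrinsic sign of the Möbius invariant, so that each summand over a $W$-orbit of flats becomes an honest graded representation of $W$ and no virtual cancellation between flats is ever needed.

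First, using the rank identity $\rk(\M)=\rk(F)+\rk(\M/F)$ valid for every flat $F$, I would collect the sign factors via $(-1)^{\rk(\M)+\rk(F)}=(-1)^{\rk(\M/F)}$ and rewrite \eqref{YM} in the form
\begin{align*}
Y^{W}_{\M}(t)
=\sum_{[F]\in\mathcal{L}(\M)/W} t^{\rk(\M/F)}\,
\mathrm{Ind}^{W}_{W_F}\!\Big(Q^{W_F}_{\M|_{F}}(t)\otimes \big((-1)^{\rk(\M/F)}\mu^{W_F}_{\M/F}\big)\Big).
\end{align*}

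Next, I would appeal to two positivity inputs from the equivariant Kazhdan--Lusztig--Stanley literature: (i) the coefficients of the equivariant inverse Kazhdan--Lusztig polynomial $Q^{W_F}_{\M|_F}(t)$ are honest representations of $W_F$; and (ii) the sign-twisted equivariant Möbius invariant $(-1)^{\rk(\M/F)}\mu^{W_F}_{\M/F}$ is an honest representation of $W_F$, in line with the classical fact that $(-1)^{\rk(\M/F)}\mu(\M/F)\geq 0$ for the geometric lattice $\mathcal{L}(\M/F)$, and realizable for instance as a top graded piece of the equivariant (reduced) Orlik--Solomon algebra of $\M/F$. Granting (i) and (ii), each summand is the induction from $W_F$ to $W$ of a tensor product of two honest graded $W_F$-representations, hence an honest graded $W$-representation; the monomial shift by $t^{\rk(\M/F)}$ and the finite direct sum over $W$-orbits of flats preserve this property degree by degree, yielding the proposition.

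The main obstacle is input~(i), the equivariant positivity of $Q^{W}_{\M}(t)$ itself, which is a genuine theorem from equivariant Kazhdan--Lusztig--Stanley theory rather than a formal consequence of the definition; input~(ii) is standard. In contrast to the analogous result for the equivariant $Z$-polynomial established by Braden, Huh, Matherne, Proudfoot and Wang~\cite{braden2020singular} via a cohomological realization, once (i) and (ii) are granted the argument here is genuinely term-by-term: the Möbius sign and the two sign factors in~\eqref{YM} conspire to eliminate all virtual contributions, leaving a sum of manifestly honest representations.
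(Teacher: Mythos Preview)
Your proposal is correct and follows essentially the same route as the paper's own proof: the paper likewise collapses the signs (using $(-1)^{\rk(\M)}(-1)^{\rk(F)}(-1)^{\rk(\M/F)}=1$) and identifies $(-1)^{\rk(\M/F)}\mu^{W_F}_{\M/F}$ with the top graded piece $OS^{W_F}_{\M/F,\rk(\M/F)}$ of the equivariant Orlik--Solomon algebra, then concludes by invoking the honest-coefficient property of $Q^{W_F}_{\M|_F}(t)$. The only cosmetic difference is that the paper explicitly derives the identity $\mu^{W}_{\M}=(-1)^{\rk(\M)}OS^{W}_{\M,\rk(\M)}$ by comparing two expressions for the equivariant characteristic polynomial, whereas you cite it as standard.
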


\begin{proof}
By \cite{gedeon2017equivariant}, the equivariant characteristic polynomial of $W \curvearrowright \M$ is defined by
\begin{align}\label{char-uni-2}
 H_\M^W(t):
 = \sum_{i=0}^{\rk(\M)} (-1)^i\, t^{\rk(\M)-i}\, OS^W_{\M,i},
\end{align}
where $OS^W_{\M,i}$ is the degree $i$ part of the
Orlik--Solomon algebra, regarded as an honest representation of $W$
arising from the natural action of $W$ on $\M$.
On the other hand, \cite[Proposition~4.1]{proudfoot2021equivariant}
gives the alternative expression
\begin{equation}\label{char-uni-1}
 H^{W}_{\M}(t)
 = \sum_{[F]\in \mathcal{L}(\M)/W}
 \Ind_{W_F}^{W}\!\left(
 \mu_{\M|_F}^{W_F} \otimes
 t^{\rk(\M)-\rk(F)}\,\zeta_{\M/F}^{W_F}
 \right),
\end{equation}
where $\zeta_{\M}^{W}$ denotes the trivial
representation of $W$ in degree~$0$.
Comparing the constant terms in \eqref{char-uni-2} and \eqref{char-uni-1} yields
\begin{equation}\label{char-uni-3}
 \mu_{\M}^{W} = (-1)^{\rk(\M)}\, OS^{W}_{\M, \, \rk(\M)}.
\end{equation}
Substituting \eqref{char-uni-3} into the defining expression
\eqref{YM} of $Y^W_{\M}(t)$, we see that the signs
$(-1)^{\rk(\M)}$, $(-1)^{\rk(F)}$, and
$(-1)^{\rk(\M/F)}$ cancel.
It follows that each summand of $Y^W_{\M}(t)$ can be written as
$$
t^{\rk(\M/F)}
\Ind^W_{W_F}\!\bigl(
Q^{W_F}_{\M|F}(t)
\otimes
OS^{W_F}_{\M/F, \rk(\M/F)}
\bigr).
$$
Since both $Q^{W_F}_{\M|F}(t)$ and
$OS^{W_F}_{\M/F,\rk(\M/F)}$ have honest coefficients,
and this property is preserved under tensor product and induction,
all coefficients of $Y^W_{\M}(t)$ are honest
representations of $W$, as required.
\end{proof}

We now consider the palindromicity of equivariant inverse
$Z$-polynomials.
It is well known that the equivariant $Z$-polynomials are palindromic,
as shown in \cite[Section~6]{proudfoot2018z} and
\cite[Corollary~4.1]{proudfoot2021equivariant}.
The same property holds for equivariant inverse
$Z$-polynomials.
This follows directly from the equivariant
Kazhdan--Lusztig--Stanley theory developed in
\cite{proudfoot2021equivariant}, and we omit the details.

\begin{proposition}\label{equ-inv-z-pro}
Let $W \curvearrowright \M$ be an equivariant matroid. 
Then $Y^{W}_{\M}(t)$ is palindromic of degree 
$\rk(\M)$, that is,
\begin{align*}
 t^{\rk(\M)}Y^{W}_{\M}(t^{-1})=Y^{W}_{\M}(t).
\end{align*}
\end{proposition}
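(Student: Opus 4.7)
The plan is to derive palindromicity directly from the equivariant Kazhdan--Lusztig--Stanley framework of \cite{proudfoot2021equivariant}, following the template of the palindromicity proof for the equivariant $Z$-polynomial in \cite[Corollary~4.1]{proudfoot2021equivariant}. The strategy is to recognize \eqref{YM} as a canonical ``total'' convolution in the equivariant incidence algebra of the $W$-poset $\mathcal{L}(\M)$, and then invoke the general palindromicity principle for such convolutions.

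First, I would rewrite \eqref{YM} in the language of that incidence algebra. Using identity \eqref{char-uni-3} from the proof of Proposition \ref{equ-inv-z-hone} to replace $(-1)^{\rk(\M/F)}\mu^{W_F}_{\M/F}$ by the honest representation $OS^{W_F}_{\M/F,\,\rk(\M/F)}$, one obtains the sign-free expression
\[
Y^W_\M(t)=\sum_{[F]\in\mathcal{L}(\M)/W} t^{\rk(\M/F)}\,\Ind^W_{W_F}\!\bigl(Q^{W_F}_{\M|_F}(t)\otimes OS^{W_F}_{\M/F,\,\rk(\M/F)}\bigr).
\]
This exhibits $Y^W_\M(t)$ as the convolution of the equivariant inverse Kazhdan--Lusztig function $Q^W$ with the top Orlik--Solomon representation, summed over $W$-orbits of flats and induced up to $W$. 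It is precisely the dual of the construction of $Z^W_\M(t)$ from $P^W$, with the roles of restriction and contraction, and of $P^W$ and $Q^W$, interchanged.

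Second, I would apply the general palindromicity theorem of equivariant KLS theory from \cite{proudfoot2021equivariant}. Its proof for $Z^W$ relies on the defining functional equation linking $P^W$, $Q^W$, and the identity element of the equivariant incidence algebra, together with the standard degree bounds $\deg P^W_\M(t),\deg Q^W_\M(t)<\rk(\M)/2$, which force the convolution to be invariant under the involution $f(t)\mapsto t^{\rk(\M)}f(t^{-1})$. Exactly the same argument, with $P^W$ and $Q^W$ swapped and restriction replaced by contraction, yields $t^{\rk(\M)}Y^W_\M(t^{-1}) = Y^W_\M(t)$. The main obstacle is purely notational: aligning the sign conventions $(-1)^{\rk(\M)}$ and $(-1)^{\rk(F)}$ in \eqref{YM} with those used in \cite{proudfoot2021equivariant}, and verifying that $\mu^{W_F}_{\M/F}$ plays the role of the correct local contribution of the equivariant characteristic element at $F$. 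Once this identification is made, palindromicity follows with no further combinatorial input, which is why the details are omitted in the paper.
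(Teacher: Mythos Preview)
Your proposal is correct and matches the paper's approach exactly: the paper does not give a proof either, stating only that palindromicity ``follows directly from the equivariant Kazhdan--Lusztig--Stanley theory developed in \cite{proudfoot2021equivariant}'' and omitting the details. Your sketch of how to instantiate that framework---recognizing $Y^W_\M$ as the dual convolution to $Z^W_\M$ and invoking the same functional-equation argument as \cite[Corollary~4.1]{proudfoot2021equivariant}---is precisely the content the paper is gesturing at.
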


\section{Uniform and \texorpdfstring{$q$}{q}-niform matroids}\label{Uniform matroids}

In this section, we prove 
Theorems~\ref{thm-equi-uni}, 
\ref{thm-invz-uni-unimodal-log}, 
and \ref{thm-invz-uni-unimodal}.
In Subsection~\ref{expression-inverse-Z}, we derive 
an explicit formula for the equivariant inverse 
$Z$-polynomials of uniform matroids equipped with 
the action of the symmetric group. 
In Subsection~\ref{sec-unimodal-log}, we show that the 
coefficients of these polynomials are equivariantly unimodal and strongly inductively log-concave. 
In Subsection~\ref{Q-niform}, we obtain explicit formulas 
for the equivariant inverse $Z$-polynomials of 
$q$-niform matroids using the Comparison Theorem for 
unipotent representations, 
and we then establish that these polynomials are also
equivariantly unimodal and strongly inductively 
log-concave.

\subsection{Equivariant inverse \texorpdfstring{$Z$}{Z}-polynomials of uniform matroids}\label{expression-inverse-Z}

We begin by describing the equivariant M\"obius invariants of
uniform matroids, which will be used in the proof of
Theorem~\ref{thm-equi-uni}.

\begin{lemma}\label{lem-mobius-equi-uni}
Let $\U_{k,n}$ be a uniform matroid with 
$n \ge k \ge 1$. Then its equivariant M\"{o}bius invariant is
\begin{align}\label{equi-uni-mobius}
\mu_{\U_{k,n}}^{\s_n} = (-1)^k V_{(n-k+1,1^{k-1})} .
\end{align}
\end{lemma}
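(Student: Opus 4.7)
The plan is to invoke the identity $\mu^{W}_{\M} = (-1)^{\rk(\M)} OS^{W}_{\M,\rk(\M)}$ established as equation~\eqref{char-uni-3} above, which reduces the lemma to showing that the top-degree equivariant Orlik--Solomon module $OS^{\s_n}_{\U_{k,n},k}$ is isomorphic to the hook representation $V_{(n-k+1,1^{k-1})}$. Once this is in hand, multiplying by $(-1)^k$ produces the claimed formula.

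First I would exploit the special shape of the Orlik--Solomon algebra of $\U_{k,n}$. Since every $(k+1)$-subset of $[n]$ is a circuit, the Orlik--Solomon relations in degree $k$ are precisely the Koszul boundaries of the top-degree monomials, so
\[
OS^{\s_n}_{\U_{k,n},\,k} \;\cong\; \bigwedge\nolimits^{k} V \,\big/\, \partial\!\bigl(\bigwedge\nolimits^{k+1} V\bigr)
\]
as $\s_n$-modules, where $V = \mathbb{C}[n]$ is the permutation representation and $\partial$ is the $\s_n$-equivariant Koszul differential. Next I would decompose each exterior power of $V$ using Pieri's rule: since $V = \mathrm{Ind}^{\s_n}_{\s_{n-1}}(\mathrm{triv})$, one has $\bigwedge^{k} V = \mathrm{Ind}^{\s_n}_{\s_k\times\s_{n-k}}(\mathrm{sgn}_k\boxtimes \mathrm{triv}_{n-k})$, whose Frobenius characteristic $e_k h_{n-k}$ expands into exactly two Schur functions:
\[
\bigwedge\nolimits^{k} V \;\cong\; V_{(n-k+1,1^{k-1})} \,\oplus\, V_{(n-k,1^k)}
\]
for $1 \le k \le n-1$, with the simpler identifications $V_{(n)}$ and $V_{(1^n)}$ in the boundary cases $k=0$ and $k=n$.

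The crucial step is to identify $\partial\!\bigl(\bigwedge^{k+1} V\bigr)$ inside $\bigwedge^{k} V$. Because the augmented Koszul complex $\bigwedge^\bullet V$ is acyclic, the image of $\partial$ coincides with the kernel of the preceding boundary, and an induction on $j$ using the short exact sequence $0 \to \ker_j \to \bigwedge^{j} V \to \ker_{j-1} \to 0$ together with the Pieri decomposition above forces $\ker\!\bigl(\partial|_{\bigwedge^{j} V}\bigr) \cong V_{(n-j,\,1^{j})}$ at every step. Specializing to $j=k$ and quotienting then yields $OS^{\s_n}_{\U_{k,n},\,k} \cong V_{(n-k+1,1^{k-1})}$, as desired.

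The main delicate point will be verifying that it is exactly the ``small hook'' $V_{(n-k,1^k)}$ that is killed in the quotient, and not $V_{(n-k+1,1^{k-1})}$. This is forced by $\s_n$-equivariance together with Schur's lemma, since the isotype $V_{(n-k,1^k)}$ simply does not appear in $\bigwedge^{k-1} V$ and therefore lies in $\ker(\partial)$ for free; the residual hook must then be transported nontrivially by $\partial$. Apart from this, only mild bookkeeping is required to handle the extreme cases $k=1$ and $k=n$, where one of the two Pieri summands degenerates.
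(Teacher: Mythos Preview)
Your argument is correct. Both you and the paper begin identically by invoking \eqref{char-uni-3} to reduce the claim to identifying the top Orlik--Solomon piece $OS^{\mathfrak S_n}_{\U_{k,n},k}$ with the hook $V_{(n-k+1,1^{k-1})}$. At that point the paper simply cites \cite[Proposition~3.9]{gedeon2017equivariant} for this identification and stops, whereas you supply a direct proof via the simplicial boundary on $\bigwedge^{\bullet}\mathbb C[n]$. Your acyclicity-plus-Pieri induction is exactly the standard way to establish the cited result, so the two proofs differ only in whether this step is quoted or unpacked; your version is self-contained but longer, while the paper's is shorter at the cost of an external reference.
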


\begin{proof}
Applying \eqref{char-uni-3} to the equivariant matroid $\s_n \curvearrowright \U_{k,n}$
and invoking
\cite[Proposition~3.9]{gedeon2017equivariant}, which asserts that
\begin{align*} OS^{\mathfrak{S}_n}_{\U_{k,n}, k}
 = V_{(n-k+1,1^{ k-1})},
 \qquad \qquad \text{for~} n\ge k\ge 1,
\end{align*}
gives the claimed identity.
\end{proof}

We are now in the position to prove Theorem~\ref{thm-equi-uni}.

\begin{proof}[Proof of Theorem~\ref{thm-equi-uni}]
Applying \eqref{YM} to $\s_n\curvearrowright \U_{k,n}$ gives
\begin{align}\label{eq-ui-qe-ninv-z}
Y_{\U_{k,n}}^{\s_n}(t)
=(-1)^{k}\sum_{[F]\in \mathcal{L}(\U_{k,n})/\s_n} (-1)^{\rk (F)} t^{\rk(\U_{k,n}/F)} \mathrm{Ind}_{(\s_n)_F}^{\s_n}
\left(Q_{\U_{k,n}|_F}^{(\s_n)_F}(t) \otimes \mu_{\U_{k,n}/F}^{(\s_n)_F} \right).
\end{align}
If $F$ has rank $i<k$, then
\begin{align*}
\U_{k,n}|_F \cong B_i, \qquad \U_{k,n}/F \cong \U_{k-i,n-i}, \qquad (\s_n)_F \cong \s_i \times \s_{n-i},
\end{align*}
where $B_i$ is the Boolean matroid of rank $i$.
The unique flat of rank $k$ is the ground set $E$, and for $F=E$ we have
\begin{align*}
\U_{k,n}|_F = \U_{k,n}, \qquad \U_{k,n}/F \cong B_{0},
\qquad (\s_n)_F=\s_n.
\end{align*}
Substituting these flats into \eqref{eq-ui-qe-ninv-z} yields
\begin{align}\label{invz-1}
 Y_{\U_{k,n}}^{\s_n}(t)
 =(-1)^{k}\sum_{i=0}^{k-1}(-1)^{i}\mathrm{Ind}_{\s_i \times \s_{n-i}}^{\s_n} \Big( Q_{B_i}^{\s_i }(t) \otimes \mu_{\U_{k-i,n-i}}^{ \s_{n-i}}\Big)t^{k-i} + Q_{\U_{k,n}}^{\s_{n}}(t).
\end{align}
By \cite[Theorem 3.2]{gao2022equivariant},
\begin{align} \label{boo-uni-invkl2}
 Q_{\U_{k,n}}^{\s_n}(t)
 = \sum_{i=0}^{\lfloor (k-1)/2 \rfloor}
 V_{(n-k+1,2^i,1^{k-2i-1})}t^{i}, \qquad \text{~for~} n \geq k \geq 1,
\end{align}
and in particular,
\begin{align}\label{boo-uni-invkl1}
Q_{B_n}^{\s_n}(t) = V_{(1^n)}, \qquad \text{~for~ } n \geq 0.
\end{align}
Combining \eqref{invz-1} with \eqref{boo-uni-invkl2},
\eqref{boo-uni-invkl1}, and Lemma~\ref{lem-mobius-equi-uni} gives
\begin{align*}
 Y_{\U_{k,n}}^{\s_n}(t)
 &= \sum_{i=0}^{k-1}\mathrm{Ind}_{\s_i \times \s_{n-i}}^{\s_n} \left( V_{(1^i)} \otimes V_{(n-k+1,1^{k-i-1})} \right)t^{k-i}
 + \sum_{i=0}^{\lfloor (k-1)/2 \rfloor}V_{(n-k+1,2^i,1^{k-2i-1})}t^{i}.
\end{align*}
Since $Y_{\U_{k,n}}^{\s_n}(t)$ is palindromic of degree $k$, we have
$Y_{\U_{k,n}}^{\s_n}(t)
 = t^k Y_{\U_{k,n}}^{\s_n}(t^{-1})
$
 and hence
 \begin{align*}
 Y_{\U_{k,n}}^{\s_n}(t)
 = \sum_{i=0}^{k-1}\mathrm{Ind}_{\s_i \times \s_{n-i}}^{\s_n} \left( V_{(1^i)} \otimes V_{(n-k+1,1^{k-i-1})} \right)t^{i}
 + \sum_{i=0}^{\lfloor (k-1)/2 \rfloor}V_{(n-k+1,2^i,1^{k-2i-1})}t^{k-i}.
\end{align*}
	Since
	$\lfloor k/2 \rfloor <k- \lfloor (k-1)/2 \rfloor$,
	the coefficient of $t^{i}$ for $0\leq i\leq \lfloor k/2 \rfloor$ in $Y_{\U_{k,n}}^{\s_n}(t)$ is exactly
	$\mathrm{Ind}_{\s_i \times \s_{n-i}}^{\s_n} \left( V_{(1^i)} \otimes V_{(n-k+1,1^{k-i-1})} \right),$
		which completes the proof.
\end{proof}

We now describe the irreducible decomposition of the induced 
representations that appear in Theorem~\ref{thm-equi-uni}, which will 
be used in the following subsection to verify their 
equivariant unimodality.

To this end, 
we briefly recall 
some basic notions from the theory of symmetric functions; 
see \cite{stanley1999enumerative} for any undefined terminology. 
Let $\Lambda_n$ denote the $\mathbb{Z}$-module of homogeneous symmetric
functions of degree $n$ in the variables
$\mathbf{x}=(x_1,x_2,\dots)$.
The Frobenius characteristic map is an isomorphism
$$
 \ch : \mathrm{VRep}(\mathfrak{S}_n) \longrightarrow \Lambda_n,
$$
which sends the irreducible representation $V_\lambda$ to the Schur
function $s_\lambda(\x)$ for each partition $\lambda$ of $n$.
This map extends directly to the graded setting
$\mathrm{grVRep}(\mathfrak{S}_n)$.
Moreover, for graded virtual representations $V_1 \in \mathrm{grVRep}(\mathfrak{S}_{n_1})$ and $V_2 \in \mathrm{grVRep}(\mathfrak{S}_{n_2})$, we have
$$
 \ch\!\left(
\Ind_{\mathfrak{S}_{n_1}\times \mathfrak{S}_{n_2}}^{\mathfrak{S}_{n_1+n_2}}
 (V_1 \otimes V_2)
 \right)
 = \ch(V_1)\, \ch(V_2).
$$
With these preliminaries, we are ready to prove 
Proposition~\ref{prop-equi-uni}.

\begin{proposition}\label{prop-equi-uni}
For the equivariant uniform matroid $\s_n \curvearrowright \U_{k,n}$ with $n\geq k \geq 1$,
\begin{align}\label{eq-ekl-uniform-1}
 Y_{\U_{k,n}}^{\s_{n}}(t)
 = & \sum_{i=0}^{\lfloor k/2 \rfloor}
 \sum_{x=0}^{i}
 \big( V_{(n-k+1,2^{x},1^{k-2x-1})} + V_{(n-k+2,2^{x-1},1^{k-2x})}
 \big) t^{i}\nonumber\\
 &\quad +\sum_{i=0}^{\lfloor (k-1)/2 \rfloor}
 \sum_{x=0}^{i}\big( V_{(n-k+1,2^{x},1^{k-2x-1})} + V_{(n-k+2,2^{x-1},1^{k-2x})}
 \big) t^{k-i}.
\end{align}
\end{proposition}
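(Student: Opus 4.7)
The plan is to apply the Frobenius characteristic map to the formula for $Y_{\U_{k,n}}^{\s_n}(t)$ in Theorem~\ref{thm-equi-uni} and expand each induced representation into irreducibles via Pieri's rule for elementary symmetric functions.

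Since $\ch(V_{(1^i)}) = s_{(1^i)}$ is the $i$-th elementary symmetric function $e_i$, and since $\ch$ turns induction from $\s_i\times\s_{n-i}$ to $\s_n$ into multiplication of symmetric functions, applying $\ch$ to the expression in Theorem~\ref{thm-equi-uni} reduces the proposition to verifying the symmetric-function identity
$$
s_{(1^i)}\cdot s_{(n-k+1,\,1^{k-i-1})} \;=\; \sum_{x=0}^{i}\Bigl(s_{(n-k+1,\,2^x,\,1^{k-2x-1})} + s_{(n-k+2,\,2^{x-1},\,1^{k-2x})}\Bigr),
$$
for each relevant $i$, with the convention that Schur functions indexed by sequences that fail to be partitions vanish.

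To establish this identity I would invoke Pieri's rule, $e_i\cdot s_\mu = \sum_\lambda s_\lambda$, where the sum ranges over partitions $\lambda$ obtained from $\mu$ by adding a vertical strip of size $i$ (at most one cell per row). Taking $\mu=(n-k+1,1^{k-i-1})$, the added cells lie in distinct rows, and I would split the enumeration according to whether the first row receives a new cell. If it does not, then $x$ of the added cells promote some length-$1$ rows to length $2$ and the remaining $i-x$ cells form new bottom rows of length $1$, producing $(n-k+1,2^x,1^{k-2x-1})$. If instead the first row does receive a cell, an identical count on the remaining rows produces $(n-k+2,2^{x-1},1^{k-2x})$. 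The second summation in Theorem~\ref{thm-equi-uni}, which supplies the coefficient of $t^{k-i}$, is handled by the very same computation.

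I do not foresee any real obstacle; the argument is essentially a careful bookkeeping of Pieri's rule. The one subtlety worth recording concerns the boundary terms: when $x=0$ the exponent $2^{x-1}$ is negative, and when $2x+1>k$ the exponent $1^{k-2x-1}$ is negative, so the corresponding sequences are not partitions and the associated Schur functions vanish. These vanishing terms are consistent with the convention on invalid partitions used in Theorem~\ref{thm-equi-uni}, and they automatically align the summation ranges in the proposition with those produced by the Pieri enumeration.
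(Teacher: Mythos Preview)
Your proposal is correct and follows essentially the same route as the paper: reduce via the Frobenius characteristic map to the Schur-function identity and then expand $s_{(1^i)}\,s_{(n-k+1,1^{k-i-1})}$ with Pieri's rule, letting the convention on invalid partitions absorb the boundary terms. The only cosmetic difference is that the paper writes the Pieri expansion with explicit $\min\{k-i-1,i\}$ and $\min\{k-i-1,i-1\}$ upper limits and then argues they simplify to $i$ and $i-1$ (treating $n=k$ separately), whereas you do the enumeration directly by splitting on whether the first row receives a cell; when $n=k$ that verbal split is slightly awkward since all rows have length~$1$, but your convention-based bookkeeping still yields the right answer.
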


\begin{proof}
By Theorem~\ref{thm-equi-uni} and the Frobenius
characteristic map, it suffices to show that for each
$0\le i\le \lfloor k/2\rfloor$,
\begin{align}
 s_{(1^i)}(\x) s_{(n-k+1,1^{k-i-1})}(\x)
 &=\sum_{x=0}^{i} \left(s_{(n-k+1,2^{x},1^{k-2x-1})}(\x)
 + s_{(n-k+2,2^{x-1},1^{k-2x})}(\x) \right)\nonumber\\
 &=\sum_{x=0}^{i} s_{(n-k+1,2^{x},1^{k-2x-1})}(\x)
 + \sum_{x=0}^{i-1} s_{(n-k+2,2^{x},1^{k-2x-2})}(\x).\label{uni-invz-thm-1}
\end{align}
When $n=k$, the Pieri rule \cite[Section~7]{stanley1999enumerative} implies
$$
s_{(1^i)}(\x) s_{(1^{k-i})}(\x)
= \sum_{x=0}^{i} s_{(2^x,1^{k-2x})}(\x),
$$
which agrees with the right-hand side of \eqref{uni-invz-thm-1}.
Now suppose $n>k$.
Applying the Pieri rule gives
\begin{align}
 &s_{(1^i)}(\x) s_{(n-k+1,1^{k-i-1})}(\x) \nonumber \\
 & \qquad = \sum_{x=0}^{\min\{k-i-1,i\}} s_{(n-k+1,2^{x},1^{k-2x-1})}(\x)
 + \sum_{x=0}^{\min\{k-i-1,i-1\}} s_{(n-k+2,2^{x},1^{k-2x-2})}(\x).\label{pieri-1}
 \end{align}
Since $0\le i\le\lfloor k/2\rfloor$, we have $k\ge 2i$, and therefore
$k-i-1\ge i-1$ with equality only when $i=k/2$.
Thus $\min\{k-i-1,i-1\}=i-1$,
so the upper bound of the second sum in
\eqref{pieri-1} equals $i-1$.

For the first minimum $\min\{k-i-1,i\}$ there are two cases.
If $k$ is odd, then $k-i-1\ge i$ for every $i\le\lfloor k/2\rfloor$, so
$\min\{k-i-1,i\}=i$.
If $k$ is even, the same inequality holds whenever $i<\frac{k}{2}$.
When $i=k/2$, however, one obtains $k-i-1=i-1<i$, so $\min\{k-i-1,i\}=i-1$.
In this boundary case $i=k/2$, the would-be term with $x=i$ in the first
sum of \eqref{pieri-1} corresponds to the partition
$(n-k+1,2^{i},1^{k-2i-1})$, but $k-2i-1=-1$. This partition is
not defined, and hence the corresponding Schur function vanishes by convention.
Therefore one may safely extend the first summation index
to $i$ in all cases, since the extra term is zero precisely when
$k$ is even and $i=k/2$.
Consequently, in all cases the first sum may be written with upper index
$i$, while the second sum has upper index $i-1$.
This is precisely the right-hand side of \eqref{uni-invz-thm-1}, which
completes the proof.
\end{proof}


\subsection{Equivariant unimodality and strongly induced log-concavity }\label{sec-unimodal-log}

This subsection is devoted to the proofs of 
Theorems~\ref{thm-invz-uni-unimodal} and 
\ref{thm-invz-uni-unimodal-log}. 
We begin with the equivariant unimodality of 
the equivariant inverse $Z$-polynomial of $\s_n \curvearrowright \U_{k,n}$.

\begin{proof}[Proof of Theorem~\ref{thm-invz-uni-unimodal}]
By the palindromicity of
$Y_{\U_{k,n}}^{\s_{n}}(t)$,
it suffices to show that for all integers
$n \ge k \ge 1$ and
$1 \le i \le \lfloor k/2 \rfloor$, the difference
\begin{align*}
[t^{i}]Y_{\U_{k,n}}^{\s_{n}}(t) - [t^{i-1}]Y_{\U_{k,n}}^{\s_{n}}(t) \in \mathrm{Rep}(\s_n).
\end{align*}
	Using the explicit formula of
$Y_{\U_{k,n}}^{\s_{n}}(t)$ from
	Proposition~\ref{prop-equi-uni},
	we compute for $1\leq i\leq \lfloor k/2 \rfloor$,
\begin{align*}
 [t^{i}]Y_{\U_{k,n}}^{\s_{n}}(t) - [t^{i-1}]Y_{\U_{k,n}}^{\s_{n}}(t)
 = V_{(n-k+1,2^{i},1^{k-2i-1})} + V_{(n-k+2,2^{i-1},1^{k-2i})}.
\end{align*}
Both summands are irreducible representations of $\mathfrak{S}_n$, 
so their sum belongs to $\mathrm{Rep}(\mathfrak{S}_n)$. 
This completes the proof.
\end{proof}

We now turn to the strongly induced log-concavity of the equivariant inverse $Z$-polynomial of $\s_n \curvearrowright \U_{k,n}$.
To this end, we begin with an inequality on Schur positivity.
Recall that a symmetric function $f(\x)$ is said to be Schur positive
if it can be expressed as a nonnegative linear combination of Schur functions.
We write $f \ge_s g$ if $f-g$ is Schur positive.
The following Schur positivity inequality
will be used repeatedly in what follows and is recalled here for convenience.

\begin{lemma}[{\cite[Theorem 3.1]{bergeron2006inequalities}}]\label{lem-schur-log-1}
Let $m,j,u,v$ be nonnegative integers with $1 \le m \le j$.
\begin{itemize}
\item[$(1)$] If $0 \le u < v$, then
\begin{align}\label{schur-posi-1}
 s_{(m,1^u)}(\x)\, s_{(j,1^v)}(\x)
 \le_s
 s_{(m,1^{v-1})}(\x)\, s_{(j,1^{u+1})}(\x);
 \end{align}
 \item[$(2)$] If $0 \le v \le u$, then
\begin{align}\label{schur-posi-2}
 s_{(m,1^u)}(\x)\, s_{(j,1^v)}(\x)
 \le_s
s_{(m,1^{v})}(\x)\, s_{(j,1^{u})}(\x).
 \end{align}
\end{itemize}
\end{lemma}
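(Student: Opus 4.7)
Plan: The plan is to prove both Schur-positivity inequalities by first deriving an explicit Schur expansion for the hook-by-hook product $s_{(m,1^u)}\, s_{(j,1^v)}$, and then comparing the two sides index by index.

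To obtain the expansion, I would apply the two-term Jacobi--Trudi identity $s_{(m,1^u)} = h_m e_u - h_{m+1} e_{u-1}$ (with the convention $e_{-1}:=0$) to both factors. Multiplying out yields four summands of the form $h_a h_b\, e_c e_d$, each of which admits a Schur expansion obtainable from the classical identities $h_a h_b = \sum_{\ell \ge 0} s_{(a+b-\ell,\ell)}$ and $e_c e_d = \sum_{\ell \ge 0} s_{(2^\ell,\, 1^{c+d-2\ell})}$ combined with the Pieri rule for products of a two-row and a two-column Schur function. After substantial telescoping of the alternating four-term sum, I expect a nonnegative expansion
\begin{equation*}
 s_{(m,1^u)}\, s_{(j,1^v)} = \sum_{\lambda \in \mathcal{S}(m,u,j,v)} s_\lambda,
\end{equation*}
where $\mathcal{S}(m,u,j,v)$ is an explicit finite set of partitions $\lambda$ characterized by their first two parts together with the number of unit-length columns in $\lambda$.

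With such a parametrization in place, both inequalities reduce to inclusions of indexing sets. For part~(2), under $m \le j$ and $v \le u$, I would verify $\mathcal{S}(m,u,j,v) \subseteq \mathcal{S}(m,v,j,u)$; for part~(1), under $u < v$, the corresponding inclusion $\mathcal{S}(m,u,j,v) \subseteq \mathcal{S}(m,v-1,j,u+1)$. Both inclusions should follow from the monotonicity of the defining inequalities under the prescribed transformations of $(u,v)$, leveraging the arm inequality $m \le j$ to control the boundary cases.

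The main obstacle is the bookkeeping in the alternating four-term expansion: hook-times-hook products generate many borderline shapes, and ensuring that no cancellation is lost across the four Pieri expansions is delicate. A cleaner alternative worth pursuing is a direct bijective argument on Littlewood--Richardson skew tableaux of shape $\lambda/(m,1^u)$ with content $(j, 1^v)$. Since the content is a hook, the Yamanouchi (reverse lattice word) condition becomes essentially trivial, so one should be able to construct an explicit injection from LR tableaux for the LHS to LR tableaux for the RHS that transfers leg-cells from the shorter arm to the longer one; the arm inequality $m \le j$ guarantees enough room for this local modification to be well-defined.
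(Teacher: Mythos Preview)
The paper does not prove this lemma at all; it is quoted from \cite[Theorem~3.1]{bergeron2006inequalities} and used as a black box in the proof of Lemma~\ref{lemma-strong-induced}. There is therefore no in-paper argument to compare against.

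On your plan itself: the two-term identity $s_{(m,1^u)} = h_m e_u - h_{m+1} e_{u-1}$ is false. For instance, with $m=2$, $u=2$ one has $h_2 e_2 - h_3 e_1 = s_{(2,2)} + s_{(2,1,1)} - s_{(4)}$, not $s_{(2,1,1)}$. What is true is the recurrence $h_m e_u = s_{(m+1,1^{u-1})} + s_{(m,1^u)}$, which unwinds to the full alternating sum $s_{(m,1^u)} = \sum_{i=0}^{u} (-1)^i h_{m+i}\, e_{u-i}$. Multiplying two such expansions gives $(u+1)(v+1)$ signed terms rather than four, and the ``substantial telescoping'' you anticipate becomes a genuinely intricate computation rather than a routine one. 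The strategy is salvageable in principle---an explicit hook-times-hook Schur expansion does exist (Remmel)---but as written the first step of your outline does not stand.

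Your alternative route, an explicit injection on Littlewood--Richardson fillings of $\lambda/(m,1^u)$ with hook content $(j,1^v)$, is sound and is in the spirit of how such inequalities are established in the cited reference. Your observation that the reverse-lattice condition becomes nearly vacuous for hook content is exactly what makes a cell-transfer injection tractable, and the arm constraint $m\le j$ is indeed what ensures the modified filling remains a valid tableau. If you pursue a self-contained proof, this is the approach to develop.
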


We provide a Schur positivity inequality, which will play a central role
in establishing the strongly induced log-concavity of
$Y_{\U_{k,n}}^{\mathfrak{S}_n}(t)$.

\begin{lemma}\label{lemma-strong-induced}
Let $n$ and $k$ be nonnegative integers with $n \ge k \ge 1$.
\begin{itemize}
 \item[$(1)$] If $1 \le i \le\lfloor k/2 \rfloor $, then
 \begin{align}\label{strong-induced-1}
 s_{(1^i)}(\x)s_{(n-k+1,1^{k-i-1})}(\x) \ge_{s} s_{(1^{i-1})}(\x)s_{(n-k+1,1^{k-i})}(\x).
 \end{align}
 \item[$(2)$] If $1 \le i \le j \le \lfloor k/2 \rfloor - 1$, then
 \begin{align}\label{strong-induced-2}
 &s_{(1^i)}(\x)s_{(1^j)}(\x)s_{(n-k+1,1^{k-i-1})}(\x)s_{(n-k+1,1^{k-j-1})}(\x)\nonumber \\
 &\qquad \qquad \qquad \ge_{s} s_{(1^{i-1})}(\x)s_{(1^{j+1})}(\x)s_{(n-k+1,1^{k-i})}(\x)s_{(n-k+1,1^{k-j-2})}(\x).
 \end{align}
\end{itemize}
\end{lemma}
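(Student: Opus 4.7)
The plan is to derive both inequalities from the Bergeron--Biagioli--Rosas inequalities recorded in Lemma~\ref{lem-schur-log-1}, together with the elementary fact that Schur positivity is preserved under multiplication (a consequence of the Littlewood--Richardson rule). Both sides of each inequality are products of hook Schur functions, which is precisely the setting of Lemma~\ref{lem-schur-log-1}.

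For Part~(1), I would separate the cases $i=1$ and $i\ge 2$. When $i=1$, the Pieri rule gives
\begin{align*}
s_{(1)}(\x)\,s_{(n-k+1,1^{k-2})}(\x)
= s_{(n-k+2,1^{k-2})}(\x) + s_{(n-k+1,1^{k-1})}(\x),
\end{align*}
so the difference equals the Schur-positive $s_{(n-k+2,1^{k-2})}(\x)$. For $2\le i \le \lfloor k/2 \rfloor$, I would chain two applications of Lemma~\ref{lem-schur-log-1}. First, part~(1) of that lemma with $m=1$, $j=n-k+1$, $u=i-2$, $v=k-i$ (whose conditions $0\le u<v$ hold since $i\ge 2$ and $2i<k+2$) yields
\begin{align*}
s_{(1^{i-1})}(\x)\,s_{(n-k+1,1^{k-i})}(\x)
\le_s
s_{(1^{k-i})}(\x)\,s_{(n-k+1,1^{i-1})}(\x).
\end{align*}
Second, part~(2) with $m=1$, $j=n-k+1$, $u=k-i-1$, $v=i-1$ (whose condition $v\le u$ holds since $2i\le k$) yields
\begin{align*}
s_{(1^{k-i})}(\x)\,s_{(n-k+1,1^{i-1})}(\x)
\le_s
s_{(1^i)}(\x)\,s_{(n-k+1,1^{k-i-1})}(\x).
\end{align*}
Composing the two inequalities establishes~\eqref{strong-induced-1}.

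For Part~(2), I would use the factorization
\begin{align*}
AC - BD = (A - B)\,C + B\,(C - D),
\end{align*}
which is Schur positive whenever $A-B$, $C-D$, $B$, and $C$ are all Schur positive. Setting
\begin{align*}
A &= s_{(1^i)}(\x)\,s_{(1^j)}(\x),
& B &= s_{(1^{i-1})}(\x)\,s_{(1^{j+1})}(\x),\\
C &= s_{(n-k+1,1^{k-i-1})}(\x)\,s_{(n-k+1,1^{k-j-1})}(\x),
& D &= s_{(n-k+1,1^{k-i})}(\x)\,s_{(n-k+1,1^{k-j-2})}(\x),
\end{align*}
the inequality $A\ge_s B$ is the classical Schur log-concavity of column shapes (from Lemma~\ref{lem-schur-log-1}(1) with $m=j=1$, or Pieri when $i=1$), while $C\ge_s D$ follows from Lemma~\ref{lem-schur-log-1}(1) applied with $m=j=n-k+1$, $u=k-j-2$, $v=k-i$, whose hypotheses $0\le u<v$ are guaranteed by $j\le k-2$ and $i\le j+1$.

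The main obstacle will be the bookkeeping: selecting the right parameters for each application of Lemma~\ref{lem-schur-log-1} and verifying its nonnegativity and strict inequality hypotheses throughout the full ranges $1\le i\le \lfloor k/2\rfloor$ in Part~(1) and $1\le i\le j\le \lfloor k/2\rfloor-1$ in Part~(2). In particular, the boundary case $i=1$ in Part~(1), and the corresponding $i=1$ case in the column-shape step of Part~(2), must be handled separately via Pieri's rule because Lemma~\ref{lem-schur-log-1}(1) requires a strictly positive first column parameter.
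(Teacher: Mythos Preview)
Your proposal is correct and essentially identical to the paper's proof: the same $i=1$ versus $i\ge 2$ case split, the same two-step chaining of Lemma~\ref{lem-schur-log-1}(1) then (2) for Part~(1), and the same pair of applications of Lemma~\ref{lem-schur-log-1}(1) (to the column factors and to the hook factors) combined multiplicatively for Part~(2). One minor slip: your Pieri expansion in the $i=1$ case of Part~(1) omits the term $s_{(n-k+1,2,1^{k-3})}(\x)$ when $k\ge 3$ and $n>k$, but this only makes the difference more Schur positive, so the conclusion is unaffected.
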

\begin{proof}
$(1)$ 
We first treat the case $i=1$.
By the Pieri rule, the product $s_{(1)}(\x)s_{(n-k+1,1^{k-2})}(\x)$ is a sum of Schur functions obtained by adding a single box to the Young diagram of $(n-k+1,1^{k-2})$.
In particular, it contains the summand $s_{(n-k+1,1^{k-1})}(\x)$, so the difference
$s_{(1)}(\x)s_{(n-k+1,1^{k-2})}(\x)-s_{(n-k+1,1^{k-1})}(\x)$ is Schur positive.
This is precisely \eqref{strong-induced-1} when $i=1$.

Now assume $2\le i \le\lfloor k/2\rfloor$.

To prove \eqref{strong-induced-1}, we first apply \eqref{schur-posi-1} from Lemma \ref{lem-schur-log-1}
with
$(m,j,u,v)=(1,n-k+1,i-2,k-i)$, which gives
\begin{align}\label{strong-induced-1-pro1}
 s_{(1^{i-1})}(\x)s_{(n-k+1,1^{k-i})}(\x) \le_{s}
 s_{(1,1^{k-i-1})}(\x)s_{(n-k+1,1^{i-1})}(\x).
\end{align}
Applying \eqref{schur-posi-2} from Lemma \ref{lem-schur-log-1}
with
$(m,j,u,v)=(1,n-k+1,k-i-1,i-1)$, we have
\begin{align}\label{strong-induced-1-pro2}
 s_{(1,1^{k-i-1})}(\x)s_{(n-k+1,1^{i-1})}(\x) \le_{s} s_{(1^{i})}(\x)s_{(n-k+1,1^{k-i-1})}(\x).
\end{align}
Combining \eqref{strong-induced-1-pro1} and \eqref{strong-induced-1-pro2} yields \eqref{strong-induced-1}.

$(2)$
For \eqref{strong-induced-2}, let $1 \le i \le j \le \lfloor k/2 \rfloor - 1$.
If $i=1$, then the Pieri rule implies
\begin{align}\label{strong-induced-2-pro1-i1}
 s_{(1^{j+1})}(\x) \le_s s_{(1)}(\x)s_{(1^{j})}(\x).
\end{align}
Applying \eqref{schur-posi-1} from Lemma~\ref{lem-schur-log-1} with
$(m,j,u,v)=(n-k+1,n-k+1,k-j-2,k-1)$ gives
\begin{align}\label{strong-induced-2-pro2-i1}
 s_{(n-k+1,1^{k-1})}(\x)s_{(n-k+1,1^{k-j-2})}(\x)
 \le_s
 s_{(n-k+1,1^{k-2})}(\x)s_{(n-k+1,1^{k-j-1})}(\x).
\end{align}
Since products of Schur-positive symmetric functions are Schur positive, multiplying \eqref{strong-induced-2-pro1-i1} and \eqref{strong-induced-2-pro2-i1} yields \eqref{strong-induced-2} for $i=1$.

Now assume $2\le i \le j \le \lfloor k/2 \rfloor - 1$.

Applying \eqref{schur-posi-1} from Lemma \ref{lem-schur-log-1}
with
$(m,j,u,v)=(1,1,i-2,j)$ yields
\begin{align}\label{strong-induced-2-pro1}
 s_{(1^{i-1})}(\x)s_{(1^{j+1})}(\x) \le_{s} s_{(1^i)}(\x)s_{(1^j)}(\x).
\end{align}
Similarly, applying \eqref{schur-posi-1} from Lemma \ref{lem-schur-log-1}
with
$(m,j,u,v)=(n-k+1,n-k+1,k-j-2,k-i)$ gives
\begin{align}\label{strong-induced02-pro2}
 s_{(n-k+1,1^{k-i})}(\x)s_{(n-k+1,1^{k-j-2})}(\x) \le_s s_{(n-k+1,1^{k-i-1})}(\x)s_{(n-k+1,1^{k-j-1})}(\x).
\end{align}
The product of Schur-positive symmetric functions is again
Schur positive by the Littlewood--Richardson rule
\cite[Section~7.10]{stanley1999enumerative}.
Combining \eqref{strong-induced-2-pro1} with \eqref{strong-induced02-pro2}
implies \eqref{strong-induced-2}.
\end{proof}

With Lemma~\ref{lemma-strong-induced} established, we proceed to prove the strongly induced log-concavity of
$Y_{\U_{k,n}}^{\s_{n}}(t)$.

\begin{proof}[Proof of Theorem \ref{thm-invz-uni-unimodal-log}]
By \cite[Proposition~3.2]{gao2023induced}, a sequence
$\{C_i\}_{i\ge 0}^n$ of $\mathfrak{S}_n$-representations is strongly
inductively log-concave with respect to
$(\mathfrak{S}_{2n}, \mathfrak{S}_n \times \mathfrak{S}_n, \mathrm{id})$
if and only if
$
(\ch C_i)(\ch C_j)-(\ch C_{i-1})(\ch C_{j+1})
$
is Schur positive for all $1\le i\le j\leq n-1$.
Hence, it suffices to show that for any
$1\le i\le j\le k-1$,
\begin{equation*}
[t^i]\ch Y_{\U_{k,n}}^{\mathfrak{S}_n}(t)\,
[t^j]\ch Y_{\U_{k,n}}^{\mathfrak{S}_n}(t)
\;\ge_s\;
[t^{i-1}]\ch Y_{\U_{k,n}}^{\mathfrak{S}_n}(t)\,
[t^{j+1}]\ch Y_{\U_{k,n}}^{\mathfrak{S}_n}(t).
\end{equation*}
	Using the explicit expression
	\eqref{eq-ekl-uniform} from
	Theorem~\ref{thm-equi-uni}, we have
\begin{align*}
\ch Y_{\U_{k,n}}^{\mathfrak{S}_n}(t)
=
\sum_{i=0}^{\lfloor k/2\rfloor}
s_{(1^i)}(\x)\,
s_{(n-k+1,1^{k-i-1})}(\x)\, t^i
+
\sum_{i=0}^{\lfloor (k-1)/2\rfloor}
s_{(1^i)}(\x)\,
s_{(n-k+1,1^{k-i-1})}(\x)\, t^{k-i}.
\end{align*}
For notational convenience, set
\begin{align*}
a_{i}:=s_{(1^i)}(\x)
 \qquad \text{and} \qquad
 b_{i}:=s_{(n-k+1,1^{k-i-1})}(\x),
\end{align*}
for $0 \leq i \leq \lfloor k/2 \rfloor$.

Suppose that $k$ is odd.
Then the coefficient sequence of
$\ch Y_{\U_{k,n}}^{\mathfrak{S}_n}(t)$ is palindromic and given by
\begin{align*}
 a_{0}b_{0}, \, a_{1}b_{1}, \, \dots, \, a_{\frac{k-1}{2} }b_{\frac{k-1}{2}}, \, a_{\frac{k-1}{2}}b_{\frac{k-1}{2}}, \, \dots,a_{1}b_{1}, \, a_{0}b_{0}.
\end{align*}
By symmetry, it suffices to verify 
\begin{align}\label{k-odd-1}
 a_i b_i\,a_j b_j
 \,\ge_s\,
 a_{i-1} b_{i-1}\, a_{j+1} b_{j+1},
 \qquad \text{~for~}
 1 \le i \le j \le (k-3)/2,
\end{align}
\begin{align}\label{k-odd-2}
 a_i b_i\,a_j b_j
 \,\ge_s\,
 a_{i-1} b_{i-1}\, a_{j-1} b_{j-1},
 \qquad \text{~for~}
 1 \le i,j \le (k-1)/2,
\end{align}
and
\begin{align}\label{k-odd-3}
 a_{i}b_{i}\ge_{s} a_{i-1}b_{i-1},
 \qquad \text{~for~}
 1 \le i \le (k-3)/2.
\end{align}
The inequalities \eqref{k-odd-1} follow directly from
\eqref{strong-induced-2} in
Lemma~\ref{lemma-strong-induced}.
Moreover, \eqref{strong-induced-1} implies that
$a_i b_i \ge_s a_{i-1} b_{i-1}$ for all
$1\le i\le (k-1)/2$, which establishes
\eqref{k-odd-2} and \eqref{k-odd-3}.

Now suppose that $k$ is even.
In this case, the coefficient sequence of $\ch \, Y_{\U_{k,n}}^{\s_n}(t)$ is
\begin{align*}
 a_{0}b_{0}, \, a_{1}b_{1}, \, \dots, \, a_{\frac{k}{2}-1 }b_{\frac{k}{2}-1},
 \, a_{\frac{k}{2} }b_{\frac{k}{2}},\, a_{\frac{k}{2}-1}b_{\frac{k}{2}-1}, \, \dots,a_{1}b_{1}, \, a_{0}b_{0}.
\end{align*}
Again by symmetry, it suffices to verify
\begin{align}\label{k-even-1}
 a_i b_i\,a_{j} b_{j}
 \,\ge_s\,
 a_{i-1} b_{i-1}\, a_{j+1} b_{j+1},
 \qquad \text{~for~}
 1 \le i \le j \le k/2-1,
\end{align}
and
\begin{align}\label{k-even-2}
 a_i b_i\,a_{j} b_{j}
 \,\ge_s\,
 a_{i-1} b_{i-1}\, a_{j-1} b_{j-1},
 \qquad \text{~for~}
 1 \le i,j \le k/2.
\end{align}
The inequality \eqref{k-even-1} follows from
\eqref{strong-induced-2}, while
\eqref{k-even-2} is a direct consequence of
\eqref{strong-induced-1}.
This completes the proof.
\end{proof}

\subsection{\texorpdfstring{$Q$}{Q}-niform matroids}\label{Q-niform}

In this subsection, we derive an explicit formula for the equivariant inverse 
$Z$-polynomial of the $q$-niform matroid under the action of 
general linear group, and we prove that its coefficients 
are both equivariantly unimodal and strongly inductively log-concave.
Throughout, we fix a prime power $q$ and denote by $\mathbb{F}_q$ the finite field with $q$ elements.

A $q$-analogue of the uniform matroid $\U_{k,n}$, denoted by $\U_{k,n}(q)$, was introduced in \cite{hameister2021chow,proudfoot2019equivariant}. 
Following Proudfoot~\cite{proudfoot2019equivariant}, we refer to $\U_{k,n}(q)$ as the $q$-niform matroid. 
Its ground set consists of all hyperplanes of $V=\mathbb{F}_q^n$, and a basis is a set of $k$ hyperplanes whose intersection has dimension $n-k$. 
Hameister, Rao, and Simpson \cite{hameister2021chow} defined 
$\U_{k,n}(q)$ as the matroid on $V$ whose bases are the linearly 
independent $k$-subsets of $V$. This matroid is not simple, since 
any two nonzero scalar multiples of the same vector form parallel elements. 
Proudfoot's definition coincides with theirs after identifying each 
parallel class with a single element.
Precisely, for each hyperplane $H$, choose a nonzero linear functional $\alpha_H\in V^*$ such that $\ker(\alpha_H)=H$, then $\dim\bigl(\bigcap_{i=1}^k H_i\bigr)=n-k$ if and only if the corresponding functionals $\alpha_{H_1},\dots,\alpha_{H_k}$ are linearly independent. Hence, Proudfoot's definition for $\U_{k,n}(q)$ coincides with the matroid on the projective space $P(V^*)$
considered by
Hameister, Rao, and Simpson.

The general linear group
$\mathrm{GL}_n(\mathbb{F}_q)$ acts naturally on $V$, and hence on the set
of hyperplanes. This induces an action of
$\mathrm{GL}_n(\mathbb{F}_q)$ on the $q$-niform matroid $\U_{k,n}(q)$.
In order to compute the equivariant inverse $Z$-polynomial of $\U_{k,n}(q)$, we recall some background on unipotent representations of
finite general linear groups; see \cite{proudfoot2019equivariant}.
The irreducible unipotent representations
of $\mathrm{GL}_n(\mathbb{F}_q)$ are indexed by partitions of $n$.
For a partition $\lambda \vdash n$, let $V_\lambda(q)$ denote the
corresponding irreducible unipotent representation. These representations
may be regarded as a $q$-analogue of the Specht modules $V_\lambda$ of
$\mathfrak{S}_n$. 
If a representation is isomorphic to a direct sum of
irreducible unipotent representations, then it is called a unipotent representation.

The connection between the uniform and $q$-niform settings is provided by the
Comparison Theorem; see \cite[Theorem~B]{curtis1975reduction} and
\cite[Theorem~2.1]{proudfoot2019equivariant}. 
For $0 \le i \le n$, let
$P_{i,n}(\mathbb{F}_q) \subseteq \mathrm{GL}_n(\mathbb{F}_q)$
denote the standard parabolic subgroup with Levi factor
$\mathrm{GL}_i(\mathbb{F}_q)\times \mathrm{GL}_{n-i}(\mathbb{F}_q)$,
and let
$
\iota \colon
P_{i,n}(\mathbb{F}_q) \rightarrow
\mathrm{GL}_i(\mathbb{F}_q)\times \mathrm{GL}_{n-i}(\mathbb{F}_q)
$
be the natural projection.
Given representations $A$ of $\mathrm{GL}_i(\mathbb{F}_q)$ and
$B$ of $\mathrm{GL}_{n-i}(\mathbb{F}_q)$, we write
$$
A * B :=
\Ind_{P_{i,n}(\mathbb{F}_q)}^{\mathrm{GL}_n(\mathbb{F}_q)} (A \otimes B),
$$
which is Harish--Chandra induction.
Let $k\leq n$ be a pair of natural numbers and let $\lambda$, $\mu$ and $\nu$ be partitions of $n$, $k$ and $n-k$ respectively.
The Comparison Theorem states that the multiplicity of $V_\lambda(q)$ in $V_\mu(q)*V_{\nu}(q)$ is equal to the multiplicity of $V_{\lambda}$ in the induced representation $\mathrm{Ind}(V_\mu \otimes V_\nu)$.

\begin{lemma}\label{lem-mobius-equi-qniform}
For any equivariant $q$-niform matroid $\mathrm{GL}_n(\mathbb{F}_q)\curvearrowright\U_{k,n}(q)$ with $n\geq k \geq 1$, the equivariant M\"obius invariant is given by
\begin{align}\label{equi-qniform-mobius}
 \mu_{\U_{k,n}(q)}^{\mathrm{GL}_n(\mathbb{F}_q)}
 = (-1)^k V_{(n-k+1,1^{k-1})}(q).
\end{align}
\end{lemma}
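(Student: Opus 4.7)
The plan is to mirror the proof of Lemma~\ref{lem-mobius-equi-uni} and use the Comparison Theorem to transport the uniform result to the $q$-niform setting.

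First, I would observe that identity \eqref{char-uni-3} was derived purely by comparing the two expressions \eqref{char-uni-2} and \eqref{char-uni-1} for the equivariant characteristic polynomial, and this derivation applies verbatim to any equivariant matroid. Applied to $\mathrm{GL}_n(\mathbb{F}_q)\curvearrowright \U_{k,n}(q)$, it yields
$$
\mu_{\U_{k,n}(q)}^{\mathrm{GL}_n(\mathbb{F}_q)}
\;=\;
(-1)^k\, OS^{\mathrm{GL}_n(\mathbb{F}_q)}_{\U_{k,n}(q),\, k},
$$
so the proof reduces to showing $OS^{\mathrm{GL}_n(\mathbb{F}_q)}_{\U_{k,n}(q),\, k} = V_{(n-k+1,1^{k-1})}(q)$.

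Next, I would analyze the flat lattice of $\U_{k,n}(q)$. Flats of rank $i<k$ correspond to $i$-dimensional subspaces of $\mathbb{F}_q^n$, their stabilizers in $\mathrm{GL}_n(\mathbb{F}_q)$ are the parabolics $P_{i,n}(\mathbb{F}_q)$ acting through the Levi factor $\mathrm{GL}_i(\mathbb{F}_q)\times \mathrm{GL}_{n-i}(\mathbb{F}_q)$, and the associated restriction and contraction are the $q$-analogue Boolean matroid and $\U_{k-i,n-i}(q)$ respectively. Consequently the expression \eqref{char-uni-1} for the equivariant characteristic polynomial of $\U_{k,n}(q)$ takes the same formal shape as its uniform counterpart, with ordinary induction replaced by Harish--Chandra induction $*$. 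The Comparison Theorem then implies that the multiplicities of each $V_\lambda(q)$ in the coefficients of $H^{\mathrm{GL}_n(\mathbb{F}_q)}_{\U_{k,n}(q)}(t)$ coincide with the multiplicities of $V_\lambda$ in the corresponding coefficients of $H^{\s_n}_{\U_{k,n}}(t)$. Comparing constant terms and invoking $OS^{\s_n}_{\U_{k,n},k} = V_{(n-k+1,1^{k-1})}$ from \cite[Proposition~3.9]{gedeon2017equivariant} then delivers the desired identity.

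The main obstacle is the bookkeeping needed to verify that the equivariant characteristic polynomial of $\U_{k,n}(q)$ genuinely admits a Harish--Chandra induction expression compatible with the Comparison Theorem: one must check that the stabilizer of each flat acts on its restriction and contraction through the appropriate Levi quotient, and that every resulting constituent is unipotent. Once this verification is complete, the lemma follows directly from the uniform computation.
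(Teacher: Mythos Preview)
Your approach is correct but more circuitous than the paper's. The paper simply quotes the explicit formula for the full equivariant characteristic polynomial of $\mathrm{GL}_n(\mathbb{F}_q)\curvearrowright \U_{k,n}(q)$ already computed by Proudfoot \cite{proudfoot2019equivariant}, and reads off its constant term; this yields \eqref{equi-qniform-mobius} in one line. You instead propose to re-derive that constant term from scratch, by expressing $H^{\mathrm{GL}_n(\mathbb{F}_q)}_{\U_{k,n}(q)}(t)$ via the flat-by-flat expansion \eqref{char-uni-1} in terms of Harish--Chandra inductions and then invoking the Comparison Theorem to match it against $H^{\mathfrak{S}_n}_{\U_{k,n}}(t)$. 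This is essentially how Proudfoot obtained his formula in the first place, so your ``main obstacle'' (checking that stabilizers act through the appropriate Levi quotients and that all constituents are unipotent) is genuine work that the paper avoids by citation. Note also that applying the Comparison Theorem to \eqref{char-uni-1} requires knowing the equivariant M\"obius invariants of the restrictions $\U_{k,n}(q)|_F\cong \U_{i,i}(q)$, so your argument implicitly runs by induction on $k$; this is fine but worth making explicit. Both routes are valid; the paper's is shorter because the structural verification has already been packaged into the cited result.
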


\begin{proof}
By the Comparison Theorem,
Proudfoot~\cite{proudfoot2019equivariant} showed that the equivariant
characteristic polynomial of
$\mathrm{GL}_{n}(\mathbb{F}_q)\curvearrowright \U_{k,n}(q)$ is
\begin{equation}\label{eq:proudfoot-char}
H_{\U_{k,n}(q)}^{\mathrm{GL}_{n}(\mathbb{F}_q)}(t)
=
\sum_{i=0}^{k-1}
(-1)^i
\bigl(
V_{(n-i,1^i)}(q)
+
V_{(n-i+1,1^{i-1})}(q)
\bigr)
t^{\,k-i}
+
(-1)^k V_{(n-k+1,1^{k-1})}(q).
\end{equation}
Recall that the equivariant M\"obius invariant
is the constant term of the equivariant characteristic polynomial.
Taking the constant term of \eqref{eq:proudfoot-char} yields
\eqref{equi-qniform-mobius}.
\end{proof}

The following theorem gives a decomposition of the coefficients of the equivariant inverse $Z$-polynomial of
$\mathrm{GL}_n(\mathbb{F}_{q})\curvearrowright \U_{k,n}(q)$ into irreducible unipotent representations.

\begin{theorem}\label{prop-q-niform-equi}
For any equivariant $q$-niform matroid
$\mathrm{GL}_n(\mathbb{F}_q)\curvearrowright \U_{k,n}(q)$ with $n \ge k \ge 1$, we have
\begin{align}\label{q-niform-equi}
Y_{\U_{k,n}(q)}^{\mathrm{GL}_n(\mathbb{F}_q)}(t)
 = & \sum_{i=0}^{\lfloor k/2 \rfloor}
 \sum_{x=0}^{i}
 \Big(
 V_{(n-k+1,2^{x},1^{k-2x-1})}(q)
 + V_{(n-k+2,2^{x-1},1^{k-2x})}(q)
 \Big) t^{i} \nonumber\\
 &\quad
 + \sum_{i=0}^{\lfloor (k-1)/2 \rfloor}
 \sum_{x=0}^{i}
 \Big(
 V_{(n-k+1,2^{x},1^{k-2x-1})}(q)
 + V_{(n-k+2,2^{x-1},1^{k-2x})}(q)
 \Big) t^{k-i},
\end{align}
where $V_\lambda(q)=0$ whenever $\lambda$ is not a partition.
\end{theorem}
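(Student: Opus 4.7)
The plan is to mirror the proof of Proposition~\ref{prop-equi-uni} for the uniform case, using the Comparison Theorem to convert Harish--Chandra inductions of unipotent representations into the symmetric-function computations already carried out for $\s_n$.

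First, I would apply the defining formula \eqref{YM} to $\mathrm{GL}_n(\mathbb{F}_q)\curvearrowright \U_{k,n}(q)$. The $\mathrm{GL}_n(\mathbb{F}_q)$-orbits of flats are parametrized by rank: each flat $F$ of rank $i<k$ corresponds to the $\mathrm{GL}_n(\mathbb{F}_q)$-orbit of a subspace of dimension $n-i$ in $V=\mathbb{F}_q^n$, and the unique flat of rank $k$ is the whole ground set. For such an $F$ of rank $i$, the restriction $\U_{k,n}(q)|_F$ is the $q$-Boolean matroid $\U_{i,i}(q)$, the contraction $\U_{k,n}(q)/F$ is isomorphic to $\U_{k-i,n-i}(q)$, and the stabilizer is the standard parabolic $P_{i,n}(\mathbb{F}_q)$ with Levi factor $\mathrm{GL}_i(\mathbb{F}_q)\times \mathrm{GL}_{n-i}(\mathbb{F}_q)$, so induction from the stabilizer is exactly the Harish--Chandra product $*$.

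Substituting this flat data into \eqref{YM} yields
\begin{align*}
Y_{\U_{k,n}(q)}^{\mathrm{GL}_n(\mathbb{F}_q)}(t)
= (-1)^{k}\sum_{i=0}^{k-1}(-1)^{i}\Big(Q_{\U_{i,i}(q)}^{\mathrm{GL}_i(\mathbb{F}_q)}(t) * \mu_{\U_{k-i,n-i}(q)}^{\mathrm{GL}_{n-i}(\mathbb{F}_q)}\Big) t^{k-i} + Q_{\U_{k,n}(q)}^{\mathrm{GL}_n(\mathbb{F}_q)}(t).
\end{align*}
Into this I would insert three ingredients: the $q$-analogue of \eqref{boo-uni-invkl1}, namely $Q_{\U_{i,i}(q)}^{\mathrm{GL}_i(\mathbb{F}_q)}(t)=V_{(1^i)}(q)$; the $q$-analogue of \eqref{boo-uni-invkl2} for the equivariant inverse Kazhdan--Lusztig polynomial of $\U_{k,n}(q)$; and Lemma~\ref{lem-mobius-equi-qniform}. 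Both $q$-analogues are unipotent and are determined, via the Comparison Theorem, by the corresponding statements over $\s_n$. The resulting expression has the same shape as the intermediate formula in the proof of Theorem~\ref{thm-equi-uni}, with ordinary induction replaced by Harish--Chandra induction.

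To extract the irreducible decomposition \eqref{q-niform-equi}, I would invoke the Comparison Theorem: the multiplicity of $V_\lambda(q)$ in $V_\mu(q)*V_\nu(q)$ equals the multiplicity of $V_\lambda$ in $\mathrm{Ind}_{\s_{|\mu|}\times \s_{|\nu|}}^{\s_{|\mu|+|\nu|}}(V_\mu\otimes V_\nu)$. Therefore the Pieri-rule expansion of $s_{(1^i)}(\x)s_{(n-k+1,1^{k-i-1})}(\x)$ performed in the proof of Proposition~\ref{prop-equi-uni} transfers verbatim to give the corresponding decomposition of $V_{(1^i)}(q)*V_{(n-k+1,1^{k-i-1})}(q)$, yielding the asserted sums of $V_{(n-k+1,2^x,1^{k-2x-1})}(q)+V_{(n-k+2,2^{x-1},1^{k-2x})}(q)$. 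Finally, palindromicity (Proposition~\ref{equ-inv-z-pro}) folds the low-degree and high-degree halves into the symmetric form stated in \eqref{q-niform-equi}. The only step that requires genuine verification is the geometric identification of flats, restrictions, contractions, and stabilizers above; once these inputs are in place, the Comparison Theorem reduces the entire problem to the symmetric-function computation already completed in the uniform case.
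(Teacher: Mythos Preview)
Your proposal is correct and follows essentially the same route as the paper: identify the $\mathrm{GL}_n(\mathbb{F}_q)$-orbits of flats, their stabilizers, restrictions and contractions, rewrite $Y_{\U_{k,n}(q)}^{\mathrm{GL}_n(\mathbb{F}_q)}(t)$ as a sum of Harish--Chandra products $V_{(1^i)}(q)*V_{(n-k+1,1^{k-i-1})}(q)$, and then invoke the Comparison Theorem to import the Pieri decomposition from Proposition~\ref{prop-equi-uni}. The only difference is cosmetic: the paper compresses the intermediate steps by writing ``as in the proof of Theorem~\ref{thm-equi-uni}'' and passing directly to the folded formula \eqref{eq-q-ekl-uniform}, whereas you spell out the analogue of \eqref{invz-1} and the palindromicity step explicitly.
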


\begin{proof}
As in the proof of Theorem~\ref{thm-equi-uni},
the $\mathrm{GL}_n(\mathbb{F}_q)$-orbits of flats of $\U_{k,n}(q)$ are indexed by
their sizes $0 \le i \le k$.
The stabilizer of a flat of size $i$ is the parabolic subgroup
$P_{i,n}(\mathbb{F}_q)$, and the corresponding localizations and
contractions are again $q$-niform matroids.
Applying definition~\eqref{YM}, we obtain an expression for
$Y_{\U_{k,n}(q)}^{\mathrm{GL}_n(\mathbb{F}_q)}(t)$ in terms of
Harish--Chandra induction.
More precisely, we have
\begin{align}\label{eq-q-ekl-uniform}
Y_{\U_{k,n}(q)}^{\mathrm{GL}_n(\mathbb{F}_q)}(t)
 = & \sum_{i=0}^{\lfloor k/2 \rfloor}
 \Big(
 V_{(1^i)}(q) * V_{(n-k+1,1^{k-i-1})}(q)
 \Big) t^{i} \nonumber\\
 &\qquad
 + \sum_{i=0}^{\lfloor (k-1)/2 \rfloor}
 \Big(
 V_{(1^i)}(q) * V_{(n-k+1,1^{k-i-1})}(q)
 \Big) t^{k-i}.
\end{align}
Here $V_{(1^i)}(q)$ arises from the equivariant inverse
Kazhdan--Lusztig polynomial of $\U_{n,n}(q)$; see 
\cite[Theorem~4.7]{gao2023induced}.
On the other hand, $V_{(n-k+1,1^{k-i-1})}(q)$ is determined by the equivariant
M\"obius invariant of $\U_{k,n}(q)$; see 
Lemma~\ref{lem-mobius-equi-qniform}.
Since Proposition~\ref{prop-equi-uni} gives the irreducible decomposition of the coefficients of
$Y_{\U_{k,n}}^{\mathfrak{S}_n}(t)$, the Comparison Theorem implies that
the same multiplicities arise in the $q$-niform setting.
Thus we complete the proof.
\end{proof}

We now consider the equivariant unimodality and strongly
induced log-concavity of the equivariant inverse
$Z$-polynomials for $q$-niform matroids.
For small ranks $1 \le k \le 5$, Gao et al.~\cite[Theorem~4.1]{gao2023induced}
obtained the strongly induced log-concavity for the equivariant $Z$-polynomials
of $\mathrm{GL}_n(\mathbb{F}_q)\curvearrowright \U_{k,n}(q)$.
For a sequence of virtual unipotent representations
of $\mathrm{GL}_n(\mathbb{F}_q)$, strongly induced log-concavity
is defined with respect to the triple
$
(G,H,\phi)
=
(\mathrm{GL}_{2n}(\mathbb{F}_q),P_{n,2n}(\mathbb{F}_q), \iota).
$
By the Comparison Theorem, this notion is equivalent to strongly
induced log-concavity for the corresponding sequence of virtual
representations of $\mathfrak{S}_n$.
The same equivalence holds for
equivariant unimodality.
Hence, 
we obtain the following result for the equivariant inverse $Z$-polynomials
of $\mathrm{GL}_n(\mathbb{F}_q)\curvearrowright\U_{k,n}(q)$.

\begin{theorem}\label{equi-q-ni-log}
Let $n\ge k\ge 1$. The equivariant inverse $Z$-polynomial $Y_{\U_{k,n}(q)}^{\mathrm{GL}_n(\mathbb{F}_q)}(t)$ is equivariant unimodal and strongly inductively log-concave.
\end{theorem}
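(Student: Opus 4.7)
The plan is to deduce both properties for $q$-niform matroids from their counterparts for uniform matroids via the Comparison Theorem. The crucial input is Theorem~\ref{prop-q-niform-equi}, which writes each coefficient of $Y_{\U_{k,n}(q)}^{\mathrm{GL}_n(\mathbb{F}_q)}(t)$ as a direct sum of irreducible unipotent representations $V_\lambda(q)$ with exactly the multiplicities appearing in the Specht-module decomposition of the coefficients of $Y_{\U_{k,n}}^{\s_n}(t)$ given in Proposition~\ref{prop-equi-uni}. This structural parallelism is what makes both desired properties transportable.

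For equivariant unimodality, I would mimic the proof of Theorem~\ref{thm-invz-uni-unimodal} verbatim in the $q$-setting. Palindromicity (Proposition~\ref{equ-inv-z-pro}) reduces the task to verifying that
\[
[t^i]Y_{\U_{k,n}(q)}^{\mathrm{GL}_n(\mathbb{F}_q)}(t) - [t^{i-1}]Y_{\U_{k,n}(q)}^{\mathrm{GL}_n(\mathbb{F}_q)}(t)
\]
is an honest representation for every $1 \le i \le \lfloor k/2 \rfloor$. Substituting formula~\eqref{q-niform-equi} and telescoping the inner sum over $x$ yields $V_{(n-k+1,2^{i},1^{k-2i-1})}(q) + V_{(n-k+2,2^{i-1},1^{k-2i})}(q)$, a sum of at most two irreducible unipotent representations, hence honest.

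For strongly induced log-concavity, I would invoke the equivalence recorded in the paragraph immediately preceding the theorem. Writing $C_i := [t^i]Y_{\U_{k,n}(q)}^{\mathrm{GL}_n(\mathbb{F}_q)}(t)$, the goal is to show
\[
\Ind_{P_{n,2n}(\mathbb{F}_q)}^{\mathrm{GL}_{2n}(\mathbb{F}_q)}\!\bigl(C_i \boxtimes C_j\bigr) - \Ind_{P_{n,2n}(\mathbb{F}_q)}^{\mathrm{GL}_{2n}(\mathbb{F}_q)}\!\bigl(C_{i-1} \boxtimes C_{j+1}\bigr) \in \mathrm{Rep}(\mathrm{GL}_{2n}(\mathbb{F}_q))
\]
for all $1 \le i \le j \le k-1$. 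By the Comparison Theorem, the multiplicity of $V_\lambda(q)$ in a Harish--Chandra product $V_\mu(q) * V_\nu(q)$ equals the multiplicity of $V_\lambda$ in the ordinary induction $\Ind_{\s_i \times \s_{n-i}}^{\s_n}(V_\mu \otimes V_\nu)$. Thus the multiplicities of the irreducible unipotent constituents in the displayed difference coincide with the Schur coefficients of the corresponding symmetric-function difference, which are precisely those already proved nonnegative in Theorem~\ref{thm-invz-uni-unimodal-log}.

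The only point that requires care, and the likely main obstacle, is confirming that the Comparison Theorem transports \emph{differences} of Harish--Chandra products, not merely individual products, and thereby preserves honest-ness. This becomes transparent once one observes that the theorem identifies multiplicities term-by-term under the assignment $V_\lambda \mapsto V_\lambda(q)$, so nonnegativity of every Schur coefficient in the uniform case lifts, summand-by-summand, to nonnegativity of every unipotent multiplicity in the $q$-niform case. Beyond this bookkeeping, no new combinatorial Schur-positivity inequality is needed, and Theorem~\ref{equi-q-ni-log} will follow as a formal consequence of Theorems~\ref{thm-invz-uni-unimodal} and~\ref{thm-invz-uni-unimodal-log}.
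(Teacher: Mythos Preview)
Your proposal is correct and follows essentially the same approach as the paper: both deduce the result from the uniform-matroid case (Theorems~\ref{thm-invz-uni-unimodal} and~\ref{thm-invz-uni-unimodal-log}) via the Comparison Theorem, using that the irreducible decomposition in Theorem~\ref{prop-q-niform-equi} matches Proposition~\ref{prop-equi-uni} term-by-term under $V_\lambda \mapsto V_\lambda(q)$. The paper is terser—it simply asserts the equivalence and states the theorem without a separate proof—whereas you spell out the unimodality telescoping explicitly in the $q$-setting, but this is a difference in exposition, not in strategy.
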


Let
$[m]_q := 1 + q + \cdots + q^{m-1}$ and $
[n]_q! := \prod_{i=1}^n [i]_q.
$
Recall that the Gaussian binomial coefficient is defined by
\begin{equation*}
\begin{bmatrix} n \\ k \end{bmatrix}_q
:=
\frac{[n]_q!}{[k]_q!\,[n-k]_q!}.
\end{equation*}
By taking dimensions of the unipotent representations appearing in \eqref{eq-q-ekl-uniform}, we obtain a closed formula for the ordinary inverse $Z$-polynomial of $q$-niform matroid.

\begin{proposition}
For any $q$-niform matroid $\U_{k,n}(q)$ with $n\geq k \geq 1$, we have
\begin{align*}
Y_{\U_{k,n}(q)}(t) = & \sum_{i=0}^{\lfloor k/2 \rfloor}
 q^{\binom{i}{2}+\binom{k-i}{2}}\begin{bmatrix} n \\ i\end{bmatrix}_q \begin{bmatrix} n-i-1 \\ k-i-1\end{bmatrix}_q
 t^{i} \nonumber\\
 &\qquad
 + \sum_{i=0}^{\lfloor (k-1)/2 \rfloor}
 q^{\binom{i}{2}+\binom{k-i}{2}}\begin{bmatrix} n \\ i\end{bmatrix}_q \begin{bmatrix} n-i-1 \\ k-i-1\end{bmatrix}_q
 t^{k-i}.
\end{align*}
\end{proposition}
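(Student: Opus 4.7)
The plan is to take dimensions of both sides of the equivariant formula \eqref{eq-q-ekl-uniform}, turning Harish--Chandra induction into a product of Gaussian binomials and dimensions of irreducible unipotent representations.

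First I would record the index formula
\[
\dim\!\bigl(A * B\bigr)
\;=\;\bigl[\mathrm{GL}_n(\mathbb{F}_q):P_{i,n}(\mathbb{F}_q)\bigr]\,\dim A\,\dim B
\;=\;\begin{bmatrix} n \\ i \end{bmatrix}_q \dim A\,\dim B,
\]
which follows from the definition of Harish--Chandra induction as an ordinary induction from $P_{i,n}(\mathbb{F}_q)$ along the projection to the Levi $\mathrm{GL}_i(\mathbb{F}_q)\times\mathrm{GL}_{n-i}(\mathbb{F}_q)$, together with the standard count of the flag variety.

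Next I would compute the two unipotent dimensions appearing in \eqref{eq-q-ekl-uniform} via the $q$-analogue of the hook length formula,
$$\dim V_\lambda(q)\;=\;q^{n(\lambda)}\,\frac{[|\lambda|]_q!}{\prod_{c\in\lambda}[h(c)]_q}.$$
For $\lambda=(1^i)$ the cells form a single column, so $n(\lambda)=\binom{i}{2}$ and the hook-length product cancels the factorial, giving $\dim V_{(1^i)}(q)=q^{\binom{i}{2}}$ (this is the Steinberg representation). For the hook $\lambda=(n-k+1,1^{k-i-1})$ of size $n-i$, one has $n(\lambda)=\binom{k-i}{2}$ and the hook lengths are $n-i$ in the corner, $1,2,\dots,k-i-1$ down the leg, and $1,2,\dots,n-k$ along the arm; the resulting quotient of $q$-factorials telescopes to
$$\dim V_{(n-k+1,1^{k-i-1})}(q)\;=\;q^{\binom{k-i}{2}}\begin{bmatrix} n-i-1 \\ k-i-1 \end{bmatrix}_q.$$

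Finally, substituting these three identities into the dimension of each summand in \eqref{eq-q-ekl-uniform} yields exactly
$$\begin{bmatrix} n \\ i \end{bmatrix}_q q^{\binom{i}{2}+\binom{k-i}{2}}\begin{bmatrix} n-i-1 \\ k-i-1 \end{bmatrix}_q,$$
which is the coefficient of both $t^i$ (for $0\le i\le\lfloor k/2\rfloor$) and $t^{k-i}$ (for $0\le i\le\lfloor(k-1)/2\rfloor$) in the claimed closed form. Since everything reduces to plugging in well-known formulas, there is no real obstacle; the only point requiring care is keeping the two exponents $\binom{i}{2}$ and $\binom{k-i}{2}$ associated with the correct factor and verifying that the hook-length product simplifies cleanly to a single Gaussian binomial.
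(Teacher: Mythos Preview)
Your proposal is correct and follows essentially the same route as the paper: take dimensions in \eqref{eq-q-ekl-uniform}, use $[\mathrm{GL}_n(\mathbb{F}_q):P_{i,n}(\mathbb{F}_q)]=\begin{bmatrix} n\\ i\end{bmatrix}_q$, and plug in the known dimensions of the unipotent irreducibles $V_{(1^i)}(q)$ and $V_{(n-k+1,1^{k-i-1})}(q)$. The only cosmetic difference is that the paper quotes these dimensions from \cite[Theorem~2.1]{proudfoot2019equivariant} while you derive them via the $q$-hook length formula.
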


\begin{proof}
Taking dimensions in \eqref{eq-q-ekl-uniform} yields
\begin{align}\label{or-eq-q-ekl-uniform}
Y_{\U_{k,n}(q)}(t)
 = & \sum_{i=0}^{\lfloor k/2 \rfloor}
 \Big(
 \dim V_{(1^i)}(q) * V_{(n-k+1,1^{k-i-1})}(q)
 \Big) t^{i} \nonumber\\
 &\qquad
 + \sum_{i=0}^{\lfloor (k-1)/2 \rfloor}
 \Big(
 \dim V_{(1^i)}(q) * V_{(n-k+1,1^{k-i-1})}(q)
 \Big) t^{k-i}.
\end{align}
By
\cite[Theorem~2.1]{proudfoot2019equivariant}, we have
$$\dim V_{(1^i)}(q)=q^{\binom{i}{2}}
\quad \text{~and~} \quad
\dim V_{(n-k+1,1^{k-i-1})}(q)=q^{\binom{k-i}{2}}
\begin{bmatrix} n-i-1 \\ {k-i-1} \end{bmatrix}_q.
$$
Since $P_{i,n}(\mathbb{F}_q)$ is the stabilizer of an $i$-dimensional subspace of $\mathbb{F}_q^n$, it follows that
$$
|\mathrm{GL}_n(\mathbb{F}_q):P_{i,n}(\mathbb{F}_q)|
=
\begin{bmatrix} n \\ i\end{bmatrix}_q
.
$$
Substituting these expressions into \eqref{or-eq-q-ekl-uniform} completes
the proof.
\end{proof}

Applying the dimension map yields the strong log-concavity of the coefficients of
$Y_{\U_{k,n}(q)}(t)$.

\begin{corollary}\label{cor-qniform-ordinary-logconcave}
Let $n\ge k\ge 1$ be integers. The inverse $Z$-polynomial $Y_{\U_{k,n}(q)}(t)$ is strongly log-concave.
\end{corollary}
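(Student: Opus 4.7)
The plan is to derive the corollary directly from the equivariant statement in Theorem~\ref{equi-q-ni-log} by applying the dimension map, together with the standard fact that induction scales dimensions by the group index. Set $C_i := [t^i]\, Y_{\U_{k,n}(q)}^{\mathrm{GL}_n(\mathbb{F}_q)}(t)$ for $0 \le i \le k$, so that $c_i := \dim C_i = [t^i]\, Y_{\U_{k,n}(q)}(t)$. By Theorem~\ref{equi-q-ni-log}, the sequence $\{C_i\}$ is strongly inductively log-concave with respect to the triple $(\mathrm{GL}_{2n}(\mathbb{F}_q), P_{n,2n}(\mathbb{F}_q), \iota)$, which means
$$
\Ind_{P_{n,2n}(\mathbb{F}_q)}^{\mathrm{GL}_{2n}(\mathbb{F}_q)}(C_i \boxtimes C_j)
\;-\;
\Ind_{P_{n,2n}(\mathbb{F}_q)}^{\mathrm{GL}_{2n}(\mathbb{F}_q)}(C_{i-1} \boxtimes C_{j+1})
\;\in\; \mathrm{Rep}(\mathrm{GL}_{2n}(\mathbb{F}_q))
$$
for every $1 \le i \le j \le k-1$.

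Next, I would invoke the standard dimension formula for induced representations: for any representations $A$ and $B$ of $\mathrm{GL}_n(\mathbb{F}_q)$, viewed as a representation of $P_{n,2n}(\mathbb{F}_q)$ via $\iota$,
$$
\dim \Ind_{P_{n,2n}(\mathbb{F}_q)}^{\mathrm{GL}_{2n}(\mathbb{F}_q)}(A \boxtimes B)
= \begin{bmatrix} 2n \\ n \end{bmatrix}_q (\dim A)(\dim B),
$$
since $[\mathrm{GL}_{2n}(\mathbb{F}_q) : P_{n,2n}(\mathbb{F}_q)] = \begin{bmatrix} 2n \\ n \end{bmatrix}_q$. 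Taking dimensions on both sides of the containment above and cancelling the strictly positive common factor $\begin{bmatrix} 2n \\ n \end{bmatrix}_q$ yields the shift-one inequality $c_i c_j \ge c_{i-1} c_{j+1}$ for all $1 \le i \le j \le k-1$.

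To upgrade this to strong log-concavity $c_i c_j \ge c_{i-s} c_{j+s}$ for an arbitrary shift $1 \le s \le i \le j \le k-1$, I would iterate the shift-one inequality $s$ times, successively replacing the pair $(i-r, j+r)$ by $(i-r-1, j+r+1)$ for $r = 0, 1, \dots, s-1$. Chaining the resulting inequalities gives $c_i c_j \ge c_{i-1} c_{j+1} \ge \cdots \ge c_{i-s} c_{j+s}$, as required.

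Since the analytic content is already packaged in Theorem~\ref{equi-q-ni-log}, there is no substantive obstacle: the corollary amounts to an elementary dimension count followed by iteration. The only minor point worth being careful about is the distinction between the shift-one formulation used in the definition of strongly inductive log-concavity and the arbitrary-shift formulation of strong log-concavity, which is resolved by the iteration in the final step.
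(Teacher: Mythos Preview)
Your proposal is correct and follows essentially the same route as the paper: invoke Theorem~\ref{equi-q-ni-log}, take dimensions using $\dim\Ind_H^G(A\boxtimes B)=[G{:}H](\dim A)(\dim B)$, and cancel the positive index factor. The paper omits your final iteration step, since for numerical sequences the shift-one condition $c_i c_j\ge c_{i-1}c_{j+1}$ is already the standard meaning of strong log-concavity (and is equivalent to the arbitrary-shift version by exactly the chaining you describe).
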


\section{Paving matroids}\label{subse-paving}

The aim of this section is to prove
Theorem~\ref{thm-paving}.
Based on Proposition~\ref{prop-equi-uni}, we derive an explicit
relation between the equivariant inverse $Z$–polynomials of 
matroid $W \curvearrowright \M$ and those of its relaxation
along a $W$–orbit of stressed hyperplanes.
This approach allows us to compute the equivariant inverse
$Z$–polynomials of arbitrary equivariant paving matroids.

Let $\M$ be a matroid of rank $k$ equipped with an action of a finite group $W$, and let $H$ be a stressed hyperplane of $\M$ with stabilizer $W_H$. Write $\mathcal{H}$ for the $W$-orbit of $H$, and define
\begin{align*}
 \mathcal{A} = \{ A \subseteq H_i : |A| = k-1,\, H_i \in \mathcal{H} \}, \qquad \qquad
 \mathcal{C} = \{ A \subseteq H : |A| = k-1 \}.
\end{align*}
Clearly, $\mathcal{C} \subseteq \mathcal{A} $.
The following lemma shows that restricting the $W$–orbit decomposition of
$\mathcal A$ to $\mathcal C$ yields exactly the $W_H$–orbit decomposition of
$\mathcal C$. This result will be used in the proof of
Theorem~\ref{thm-paving}.

\begin{lemma}\label{lem-paving-1}
The partition of $\mathcal A$ into $W$-orbits, when restricted to
$\mathcal C$,
coincides with the partition of $\mathcal C$ into $W_H$-orbits.
In particular,
$$
|\mathcal A / W| = |\mathcal C / W_H|,
$$
and every $W$-orbit in $\mathcal A$ contains a unique $W_H$-orbit in $\mathcal C$.
\end{lemma}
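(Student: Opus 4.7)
The plan is to show each $W$-orbit in $\mathcal{A}$ intersects $\mathcal{C}$, and then show the resulting intersection is precisely a single $W_H$-orbit. The conclusion about orbit counts then follows immediately by counting.

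First I would establish the key closure identity: for every $A\in\mathcal{C}$, the matroid closure $\overline{A}$ equals $H$. Since $H$ is stressed, every $k$-subset of $H$ is a circuit, and since circuits are minimal dependent sets, this forces every $(k-1)$-subset of $H$ to be independent. Hence $A$ is an independent set with $|A|=k-1$, so $\rk(A)=k-1=\rk(H)$. As $\overline{A}$ is a flat of rank $k-1$ contained in the flat $\overline{H}=H$ of the same rank, the two flats must coincide. This closure identity encodes the stressed condition in a form amenable to equivariance.

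Second, I would verify surjectivity onto $W$-orbits. Let $B\in\mathcal{A}$, so $B\subseteq H_i$ for some $H_i\in\mathcal{H}$. By definition of $\mathcal{H}$ as the $W$-orbit of $H$, there exists $w\in W$ with $w\cdot H_i=H$, and then $w\cdot B\in\mathcal{C}$. Thus every $W$-orbit in $\mathcal{A}$ meets $\mathcal{C}$. Third, I would check that within $\mathcal{C}$, the relations ``same $W$-orbit'' and ``same $W_H$-orbit'' coincide. The nontrivial direction uses the closure identity: if $A,A'\in\mathcal{C}$ satisfy $w\cdot A=A'$ for some $w\in W$, then applying $w$ to $\overline{A}=H$ and using the $W$-equivariance of closure yields
\[
w\cdot H \;=\; w\cdot\overline{A} \;=\; \overline{w\cdot A} \;=\; \overline{A'} \;=\; H,
\]
so $w\in W_H$. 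The reverse inclusion is tautological. Combining these steps shows that the $W$-orbits in $\mathcal{A}$ restrict exactly to the $W_H$-orbits in $\mathcal{C}$, and the cardinality statement $|\mathcal{A}/W|=|\mathcal{C}/W_H|$ follows.

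The main obstacle is locating where the stressed hyperplane hypothesis enters, namely in establishing $\overline{A}=H$ for every $(k-1)$-subset $A$ of $H$. Without this, there would be no reason why a $W$-element moving $A$ to $A'$ within $H$ must preserve $H$. Once the closure identity is in hand, the rest is a clean application of $W$-equivariance of the closure operator together with the defining property of the orbit $\mathcal{H}$.
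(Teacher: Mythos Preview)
Your proposal is correct and follows the same two-step structure as the paper's proof: first show every $W$-orbit in $\mathcal{A}$ meets $\mathcal{C}$, then show that two elements of $\mathcal{C}$ lying in the same $W$-orbit already lie in the same $W_H$-orbit. The only difference is in how the second step is executed. The paper argues that if $wA_1=A_2$ with $A_1,A_2\in\mathcal{C}$, then $|H\cap wH|\ge k-1$, and then invokes \cite[Proposition~3.8]{ferroni2023stressed} (distinct stressed hyperplanes intersect in at most $k-2$ elements) to force $wH=H$. You instead establish the closure identity $\overline{A}=H$ for every $A\in\mathcal{C}$ and use $W$-equivariance of closure to get $wH=w\overline{A_1}=\overline{wA_1}=\overline{A_2}=H$ directly. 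Your route is slightly more self-contained, avoiding the external citation, while the paper's route makes the geometric obstruction (large intersection of hyperplanes) more visible; both rest on the same underlying fact that $(k-1)$-subsets of a stressed hyperplane are independent.
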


\begin{proof}
We first show that every $W$-orbit in $\mathcal A$ intersects $\mathcal C$.
Let $A \in \mathcal A$.
Then $A \subset H_i$ for some hyperplane $H_i \in \mathcal H$.
Since $W$ acts transitively on hyperplanes, there exists $w \in W$
such that $wH_i = H$.
Consequently, $wA \subset H$, so $wA \in \mathcal C$.
This proves that each $W$--orbit in $\mathcal A$ contains at least one element
of $\mathcal C$.

We proceed to compare the orbit partitions on $\mathcal C$.
Since $W_H \subseteq W$, every $W_H$-orbit in $\mathcal C$
is contained in a single $W$-orbit.
It remains to show that two elements of $\mathcal C$
lying in the same $W$--orbit must already lie in the same $W_H$-orbit.
Let $A_1, A_2 \in \mathcal C$ with $|A_1|=|A_2|=k-1$, and suppose
$A_2 = wA_1$ for some $w \in W$.
Since $A_2 \subset H$ and $A_2 \subset wH$, we have
$$
|H \cap wH| \ge k-1.
$$
By \cite[Proposition~3.8]{ferroni2023stressed}, two distinct hyperplanes
intersect in at most $k-2$ elements.
Therefore $wH = H$, which implies $w \in W_H$.
Hence $A_1$ and $A_2$ lie in the same $W_H$-orbit.

Combining the above arguments, we conclude that the restriction of the
$W$--orbit partition of $\mathcal A$ to $\mathcal C$ coincides exactly with
the $W_H$--orbit partition of $\mathcal C$.
In particular, the two partitions have the same number of equivalence classes,
and representatives for $W$-orbits in $\mathcal A$ may be chosen directly
from $\mathcal C$.
\end{proof}

With Lemma~\ref{lem-paving-1} established, we now analyze the effect of relaxing the entire $W$-orbit $\mathcal{H}$ of the stressed hyperplane~$H$.
By \cite[Proposition~3.9]{ferroni2023stressed}, relaxing one stressed hyperplane keeps other stressed hyperplanes stressed, and hence relaxing any collection of stressed hyperplanes is independent of the order in which they are relaxed.
Let $\widetilde{\M}$ denote the matroid obtained by relaxing all hyperplanes in $\mathcal{H}$ simultaneously.
Its bases are those of $\M$ together with all $k$-subsets contained in a unique element of $\mathcal{H}$.
Since this relaxation is $W$-equivariant, the $W$-action on $\M$ extends naturally to $\widetilde{\M}$.
The following result describes how this operation modifies the equivariant inverse $Z$–polynomial.

\begin{theorem}\label{pro-relaxM-M}
Fix integers $h\ge k\ge 1$.
Let $\M$ be a matroid of rank $k$ equipped with a $W$-action, and let $H$ be a stressed hyperplane of $\M$ of size $h$.
Let $\widetilde{\M}$ be the matroid obtained from $\M$ by relaxing every hyperplane in the $W$-orbit of $H$.
Then there exists a polynomial $y^{\mathfrak S_h}_{k,h}(t)$ with coefficients in the virtual representation ring of $\mathfrak S_h$ such that
\begin{align*}
Y^{W}_{\widetilde{\M}}(t)-Y^{W}_{\M}(t)=\mathrm{Ind}^{W}_{W_H}\mathrm{Res}^{\s_h}_{W_H}y^{\s _h}_{k,h}(t),
\end{align*}
where $W_H\subseteq W$ denotes the stabilizer of $H$.
\end{theorem}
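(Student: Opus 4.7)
My plan is to compute $Y^W_{\widetilde{\M}}(t)-Y^W_\M(t)$ orbit by orbit directly from the defining sum \eqref{YM}, and to package the result in the required $\Ind\Res$-form using Lemma \ref{lem-paving-1}.

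The starting point is a careful description of $\mathcal{L}(\widetilde{\M})$. I would show that $\mathcal{L}(\widetilde{\M})$ is obtained from $\mathcal{L}(\M)$ by deleting the orbit $\mathcal{H}$ and adjoining every $(k-1)$-subset of a hyperplane in $\mathcal{H}$ as a new hyperplane. Each $H' \in \mathcal H$ loses its flat status because its rank in $\widetilde{\M}$ jumps to $k$ and $H' \neq E$; conversely, any $(k-1)$-subset $F'$ of such an $H'$ becomes a hyperplane in $\widetilde{\M}$, because its closure in $\M$ is the unique $H'$ containing it (via the intersection bound for stressed hyperplanes) and the relaxation bumps its rank upon adding any element of $H' \setminus F'$. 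All other flats of $\M$ persist with unchanged rank; the subtle case $F \subsetneq H'$ is controlled by stressedness, which forces $\rk_\M(F)\le k-2$ and $|F|\le k-2$, so the rank functions of $\M$ and $\widetilde{\M}$ agree on $F$ and on each one-element extension.

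Matching \eqref{YM} term by term, every orbit $[F]$ of common flats with $F \neq E$ and $F$ not contained in any $H' \in \mathcal{H}$ contributes identically to both polynomials. The difference $Y^W_{\widetilde{\M}}(t) - Y^W_\M(t)$ therefore collects contributions from exactly four sources: (a) the orbit $[H]$, only in $Y^W_\M$; (b) the orbits of the new $(k-1)$-subsets of $\mathcal{H}$, only in $Y^W_{\widetilde{\M}}$; (c) the top flat $[E]$, which contributes $Q^W_{\widetilde{\M}}(t) - Q^W_\M(t)$; (d) orbits of flats $F \subsetneq H'$ for some $H' \in \mathcal{H}$, where $\widetilde{\M}/F$ is the relaxation of $\M/F$ along the stressed hyperplane $H' \setminus F$ of size $h-|F|$. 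For (a) and (b), Lemma \ref{lem-paving-1} converts the $W$-orbit sum over $(k-1)$-subsets of $\mathcal{H}$ into an $\Ind^W_{W_H}$ of a $W_H$-orbit sum on $H$, and because $\M|_H \cong \U_{k-1,h}$ is genuinely $\s_h$-equivariant, the result passes through $\Res^{\s_h}_{W_H}$. For (c), the Karn--Nasr--Proudfoot--Vecchi relaxation formula for $Q^W$ from \cite{karn2023equivariant} immediately yields the required form. For (d), I would induct on the rank $k$: since $\M/F$ has rank strictly less than $k$ and retains $H' \setminus F$ as a stressed hyperplane, the induction hypothesis applies to rewrite $\mu^{W_F}_{\widetilde{\M}/F} - \mu^{W_F}_{\M/F}$ in the desired shape.

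The main obstacle is family (d): a flat $F$ of rank $\le k-2$ may be contained in several hyperplanes of $\mathcal H$ (allowed by the bound $\rk_\M(H_1 \cap H_2) \le k-2$ coming from \cite[Proposition~3.8]{ferroni2023stressed}), in which case $\widetilde{\M}/F$ simultaneously relaxes several stressed hyperplane orbits of $\M/F$ rather than a single one. Handling this will require either a telescoping reduction to single-orbit relaxations or a strengthening of the induction hypothesis to accommodate multiple simultaneous orbits. Once all four contributions are expressed as $\Ind^W_{W_H}\Res^{\s_h}_{W_H}$ applied to local data on $H$, the pooled local expression defines the required polynomial $y^{\s_h}_{k,h}(t) \in \mathrm{grVRep}(\s_h)$, depending only on $k$ and $h$.
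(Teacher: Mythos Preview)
Your flat-by-flat decomposition and use of Lemma~\ref{lem-paving-1} match the paper's argument; the differences lie in how contributions (c) and (d) are handled.

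For (c), the paper sidesteps the $Q$-difference altogether via a palindromicity trick: setting $R^W_\M:=Y^W_\M-Q^W_\M$, it notes that $Y$-differences are palindromic of degree $k$ while $Q$-differences have degree $<k/2$, so it suffices to put $R^W_{\widetilde\M}-R^W_\M$ in $\Ind\Res$ form and then recover $y^{\s_h}_{k,h}$ by reflecting the coefficients of $r^{\s_h}_{k,h}$. Your plan to invoke the $Q$-relaxation formula from \cite{karn2023equivariant} is a legitimate alternative, though it imports an external result where the paper stays self-contained.

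For (d), the paper does not induct on $k$. It instead quotes \cite[Corollary~9.8]{elias2025categorical} for the identity $OS^W_{\widetilde\M,k}-OS^W_{\M,k}=\Ind^W_{W_H}\Res^{\s_h}_{W_H}\wedge^k V_{(h-1,1)}$, which together with \eqref{char-uni-3} gives $\mu^{W_F}_{\widetilde\M/F}-\mu^{W_F}_{\M/F}$ directly for every relevant contraction. Your induction is not well-formed as written: the hypothesis concerns $Y$, not $\mu$, and you give no bridge between them. The missing link is that $(-1)^{\rk(\M)}\mu^W_\M=[t^{\rk(\M)}]Y^W_\M(t)$, since only the $F=\varnothing$ summand in \eqref{YM} reaches top degree; once this is noted, applying the hypothesis orbit-by-orbit to $\M/F$ and extracting top coefficients does reproduce the paper's M\"obius formula, so your scheme can be completed. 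In either approach, the multi-hyperplane obstacle you flag is resolved not by telescoping alone but by the double-coset reindexing $D(F,H)/W_F\cong W_H\backslash C(F,H)$ from \cite{karn2023equivariant}, which converts each local $\Ind^{W_F}_{W_J\cap W_F}\Res^{\s_{h-|F|}}$ contribution into the required global $\Ind^W_{W_H}\Res^{\s_h}_{W_H}$ shape. This reindexing is where most of the work in (d) actually lies, and your plan does not address it.
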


\begin{proof}
Set
\begin{align*}
 R^{W}_{\M}(t):=Y^{W}_{\M}(t)-Q^{W}_{\M}(t).
\end{align*}
Then
\begin{align*}
 R^{W}_{\widetilde{\M}}(t)-R^{W}_{\M}(t)= \Big(Y^{W}_{\widetilde{\M}}(t)-Y^{W}_{\M}(t)\Big)
 -
 \Big(Q^{W}_{\widetilde{\M}}(t)-Q^{W}_{\M}(t)\Big).
\end{align*}
Note that $Q^{W}_{\M}(t)$ has degree strictly less than $\tfrac{k}{2}$, while
$Y^{W}_{\M}(t)$ is a polynomial of degree $k$ whose coefficients satisfy
$t^{k}Y^{W}_{\M}(t^{-1}) = Y^{W}_{\M}(t).$
It follows that the difference
$Y^{W}_{\widetilde{\M}}(t)-Y^{W}_{\M}(t)$ is again a
polynomial of degree at most $k$ whose coefficients are symmetric, while
$Q^{W}_{\widetilde{\M}}(t)-Q^{W}_{\M}(t)$ has degree strictly less than $\frac{k}{2}$.
Since a polynomial of degree $<\tfrac{k}{2}$ cannot contribute to
the coefficient-symmetric part in degrees $\geq \tfrac{k}{2}$, it suffices to prove that
there exists a polynomial
$r^{\mathfrak S_h}_{k,h}(t)$ with coefficients in the virtual representation
ring of $\mathfrak S_h$ such that
\begin{align*}
 R^{W}_{\widetilde{\M}}(t) - R^{W}_{\M}(t)
 =
 \mathrm{Ind}^{W}_{W_H}
 \mathrm{Res}^{\mathfrak S_h}_{W_H}
 r^{\mathfrak S_h}_{k,h}(t).
\end{align*}

Recall that $\mathcal{H}$ denotes the $W$-orbit of $H$ and $ \mathcal{A} = \{ A \subseteq H_i : |A| = k-1, \, H_i \in \mathcal{H} \}$. By \cite[Proposition 3.10]{ferroni2023stressed}, we have
\begin{align}\label{flat-1}
 \mathcal{L}(\widetilde{\M}) = (\mathcal{L}(\M) \smallsetminus \mathcal{H}) \sqcup \mathcal{A}.
\end{align}
Since
\begin{align}\label{r-eq-2}
 R^{W}_{\M}(t)=(-1)^{\rk(\M)}\sum_{[F] \in \left(\mathcal{L}(\M) \smallsetminus \{E\} \right)/W}(-1)^{\rk(F)}t^{\rk(\M/F)}~\mathrm{Ind}^{W}_{W_F}
 Q^{W_F}_{\M|_{F}}(t) \otimes \mu^{W_F}_{\M/F},
\end{align}
expanding $R^{W}_{\widetilde{\M}}(t)-R^{W}_{\M}(t)$
according to \eqref{flat-1} and \eqref{r-eq-2} yields three types of contributions.
We now analyze these three contributions separately and show that
each of them can be expressed in the form
$
\Ind^{W}_{W_H}
\Res^{\mathfrak{S}_h}_{W_H}(\cdot).
$
as required.

\emph{(1) The contribution coming from the flat $H$ itself.}
This gives the term
\begin{align}\label{threepart-1}
t^{\rk(\M/H)}~\mathrm{Ind}^{W}_{W_H}
Q^{W_H}_{\M|_{H}}(t) \otimes \mu^{W_H}_{\M/H}.
\end{align}
The isomorphism $\M|_H \cong \U_{k-1,h}$ yields
\begin{align*}
 Q^{W_H}_{\M|_{H}}(t) = Q^{W_H}_{\U_{k-1,h}}(t) = \mathrm{Res}^{\s_h}_{W_H}Q^{\s_h}_{\U_{k-1,h}}(t).
\end{align*}
Moreover, $\M/H\cong\U_{1,|E|-h}$ and
Lemma~\ref{lem-mobius-equi-uni} imply
\begin{align*}
 \mu^{W_H}_{\M/H} = \mu^{W_H}_{\U_{1,|E|-h}} = \mathrm{Res}^{\s_{|E|-h}}_{W_H}\mu^{\s_{|E|-h}}_{\U_{1,|E|-h}} = -\tau_{W_H}. 
\end{align*}
Hence, the term \eqref{threepart-1} turns out to be
\begin{align*}
 t~\mathrm{Ind}^{W}_{W_H}\Big(
\mathrm{Res}^{\s_h}_{W_H}Q^{\s_h}_{\U_{k-1,h}}(t) \otimes \big(-\tau_{W_H}\big)\Big)
 =& -t~\mathrm{Ind}^{W}_{W_H}
 \mathrm{Res}^{\s_h}_{W_H}Q^{\s_h}_{\U_{k-1,h}}(t).
\end{align*}
Since $Q^{\mathfrak{S}_h}_{\U_{k-1,h}}(t)$ depends only on $k$ and $h$,
this contribution is indeed of the asserted form.


\emph{(2) The contribution coming from the $W$-orbits in $\mathcal{A}$.}
It takes the form
\begin{align}\label{threepert-2}
 \sum_{[F] \in \mathcal{A}/W}(-1)^{k+\rk_{\widetilde{\M}}(F)}t^{\rk(\widetilde{\M}/F)}~\mathrm{Ind}^{W}_{W_F}
Q^{W_F}_{\widetilde{\M}|_{F}}(t) \otimes \mu^{W_F}_{\widetilde{\M}/F}.
\end{align}
For each $F \in \mathcal{A}$, the restriction satisfies $\widetilde{\M}|_F \cong B_{k-1}$, which gives
\begin{align*}
 Q^{W_F}_{\widetilde{\M}|_{F}}(t) = Q^{W_F}_{B_{k-1}}(t) = \mathrm{Res}^{\s_{k-1}}_{W_F} Q^{\s_{k-1}}_{B_{k-1}}(t)=\mathrm{Res}^{\s_{k-1}}_{W_F} V_{(1^{k-1})}.
\end{align*}
where the last equality follows from \eqref{boo-uni-invkl1}.
The contraction satisfies $\widetilde{\M}/F \cong \U_{1,|E|-k+1}$, and
Lemma~\ref{lem-mobius-equi-uni} gives
\begin{align*}
 \mu^{W_F}_{\widetilde{\M}/F} = \mu^{W_F}_{\U_{1,|E|-k+1}} = \mathrm{Res}^{\s_{|E|-k+1}}_{W_F} \mu^{\s_{|E|-k+1}}_{\U_{1,|E|-k+1}} = - \tau_{W_F}.
\end{align*}
By Lemma~\ref{lem-paving-1}, summation over $[F]\in\mathcal{A}/W$
may equivalently be taken over $[F]\in\mathcal{C}/W_H$.
Moreover, for each $F\in\mathcal{C}$, the stabilizer satisfies $W_F\subseteq W_H$. In fact,
if $w\in W_F$ then $w$ stabilizes $F$ setwise, so
$|wH\cap H|\ge|F|=k-1$; but \cite[Proposition~3.8]{ferroni2023stressed}
states that two distinct hyperplanes intersect in size at most $k-2$,
forcing $wH=H$ and hence $w\in W_H$.
Combining this with the fact that $\rk_{\widetilde{\M}}(F)=k-1$,
the sum~\eqref{threepert-2} equals
\begin{align*}
&-t\sum_{[F] \in \mathcal{C} /W_H }\mathrm{Ind}^{W}_{W_H} \mathrm{Ind}^{W_H}_{W_F}
\Big (\mathrm{Res}^{\s_{k-1}}_{W_F} V_{(1^{k-1})}
\otimes \big(- \tau_{W_F}\big)\Big) \notag
\\
 = & t \sum_{[F] \in \mathcal{C} /W_H }\mathrm{Ind}^{W}_{W_H} \mathrm{Ind}^{W_H}_{W_F}
 \mathrm{Res}^{\s_{k-1}}_{W_F} V_{(1^{k-1})} \notag \\
 = &
t~\mathrm{Ind}^{W}_{W_H}\sum_{[F] \in \mathcal{C}/W_H}\mathrm{Ind}^{W_H}_{W_F}
\mathrm{Res}^{\s_{k-1}}_{W_F} V_{(1^{k-1})}. 
\end{align*}
Since $W_H$ acts on $\mathcal{C}$ by permutation,
\cite[Lemma~2.7]{proudfoot2021equivariant}
allows us to replace the sum over $W_H$–orbits
by the sum over all elements of $\mathcal{C}$.
Consequently, we obtain
\begin{align*}
t~\mathrm{Ind}^{W}_{W_H} \sum_{F \in \mathcal{C} }
\mathrm{Res}^{\s_{k-1}}_{W_F } V_{(1^{k-1})}
=
t~\mathrm{Ind}^{W}_{W_H} ~\mathrm{Res}^{\s_h}_{W_H} \Big(\sum_{F \in \mathcal{C} } V_{(1^{k-1})}\Big).
\end{align*}
Since each summand is the restriction of the same
$\mathfrak{S}_{k-1}$–representation,
the $\mathfrak{S}_h$–representation
$\sum_{F\in\mathcal{C}} V_{(1^{k-1})}$
can be identified via the standard description of permutation
induction with
$$
\Ind^{\mathfrak{S}_h}_{\mathfrak{S}_{k-1}\times\mathfrak{S}_{h-k+1}}
\bigl(
V_{(1^{k-1})}\otimes\tau_{\mathfrak{S}_{h-k+1}}
\bigr).
$$
Therefore,
\begin{align*} t~\mathrm{Ind}^{W}_{W_H} ~\mathrm{Res}^{\s_h}_{W_H} \Big(\Ind^{\s_h}_{\s_{k-1} \times \s_{h-k+1}}\big( V_{(1^{k-1})} \otimes \tau_{\s_{h-k+1}} \big)\Big),
\end{align*}
which is of the required form.


\emph{(3) The contribution from the remaining flats of $\M$,
that is, from $F\in\mathcal{L}(\M)\setminus(\mathcal{H}\cup\{E\})$.}
For all such $F$, we have $\rk_{\M}(F)=\rk_{\widetilde{\M}}(F)$, and hence this contribution is given by
\begin{align}\label{paving-term3}
\sum_{[F]}(-1)^{k+\rk_{\M}(F)}~\mathrm{Ind}^{W}_{W_F} \Big(t^{\rk(\widetilde{\M}/F)}
 Q^{W_F}_{\widetilde{\M}|_{F}}(t)\otimes \mu^{W_F}_{\widetilde{\M}/F} - t^{\rk(\M/F)}
 Q^{W_F}_{\M|_{F}}(t) \otimes \mu^{W_F}_{\M/F}\Big),
 \end{align}
where the sum ranges over
$(\mathcal{L}(\M)\setminus(\mathcal{H}\cup\{E\}))/W$.
Since $\widetilde{\M}|_F=\M|_F$ and $\rk(\widetilde{\M}/F)=\rk(\M/F)$ for all flats in the above index set, the two terms inside the parentheses differ only through the M\"{o}bius invariant.
Moreover, if $F$ is not contained in any hyperplane $H_i\in\mathcal{H}$, then $\widetilde{\M}/F=\M/F$, and the corresponding summand vanishes.
It follows that only flats $F\subset H_i\in\mathcal{H}$ with $|F|\le k-2$ contribute to \eqref{paving-term3}. Thus the sum reduces to
\begin{align}\label{paving-term3-3}
 \sum_{[F]} (-1)^{k+\rk_{\M}(F)}t^{\rk(\M/F)}~\mathrm{Ind}^{W}_{W_F}\Big( Q^{W_F}_{\M|_{F}}(t)\otimes \big(\mu^{W_F}_{\widetilde{\M}/F} - \mu^{W_F}_{\M/F}\big) \Big),
\end{align}
where the sum is taken over
$
\{F\mid F\subset H_i,\ |F|\le k-2,\ H_i\in\mathcal{H}\}/W.
$
For such $F$, the restriction $\M|_{F}$ is the Boolean matroid on $F$.

To analyze the difference
$\mu^{W_F}_{\widetilde{\M}/F}-\mu^{W_F}_{\M/F}$,
we first observe that the matroid $\widetilde{\M}/F$ admits the following
equivalent description.
It can be obtained from $\M$ by first relaxing all stressed hyperplanes
in $\mathcal{H}$ and then contracting $F$.
Equivalently, one may first contract $F$ and then relax $J\setminus F$ of those hyperplanes $J\in\mathcal{H}$ that contain $F$.
Indeed, if a stressed hyperplane $J\in\mathcal{H}$ does not contain $F$, then it becomes invisible in the contraction $\M/F$, so relaxing $J$ does not affect $\M/F$.
Consequently, only hyperplanes $J\in\mathcal{H}$ satisfying
$F\subset J$ contribute to the difference between $\M/F$ and
$\widetilde{\M}/F$.
We therefore define
$$
D(F,H):=\{J\in\mathcal{H}\mid F\subset J\}.
$$
By \cite[Corollary~9.8]{elias2025categorical}, we have
\begin{align}\label{paving-third-2} OS^{W}_{\widetilde{\M},k}-OS^{W}_{\M,k}= \Ind^{W}_{W_H}~\mathrm{Res}^{\s_h}_{W_H} \wedge^{k} V_{(h-1,1)}.
\end{align}
Combining \eqref{char-uni-3} with \eqref{paving-third-2} yields
\begin{align}\label{mobius-difference}
\mu^{W_F}_{\widetilde{\M}/F} - \mu^{W_F}_{\M/F} = (-1)^{k-|F|}\sum_{[J] \in D(F,H)/W_F} \Ind^{W_F}_{W_{J} \cap W_{F}} \mathrm{Res}^{\s_{h-|F|}}_{W_{J} \cap W_{F} } \wedge^{k-|F|} V_{(h-|F|-1,1)}.
\end{align}
Substituting \eqref{mobius-difference} into
\eqref{paving-term3-3} and using the standard identity \cite[Section~3.3]{serre1977linear}
$$
V\otimes \Ind_{H}^{G} V_1 \;=\;
\Ind_{H}^{G}\!\left(\mathrm{Res}_{H}^{G} V \otimes V_1\right)
$$
 for any subgroup $H\le G$, we obtain
 {\small
\begin{align}
&
\Ind^{W}_{W_F} \Big(
Q^{W_F}_{\M|_{F}}(t) \otimes (\mu^{W_F}_{\widetilde{\M}/F} - \mu^{W_F}_{\M/F})\Big)
\notag \\
 =&\Ind^{W}_{W_F}(-1)^{k-|F|} \Big(Q^{W_F}_{\M|_{F}}(t) \otimes \sum_{[J] \in D(F,H)/W_F} \mathrm{Ind}^{W_F}_{W_J \cap W_F} \mathrm{Res}^{\s_{h-|F|}}_{W_{J}\cap W_F} \wedge^{k-|F|} V_{(h-|F|-1,1)} \Big) \notag\\
 = &(-1)^{k-|F|} \mathrm{Ind}^{W}_{W_F}\sum_{[J] \in D(F,H)/W_F} \mathrm{Ind}^{W_F}_{W_J \cap W_F} \Big( \mathrm{Res}^{W_F}_{W_J \cap W_F} Q^{W_F}_{\M|_{F}}(t) \otimes \mathrm{Res}^{\s_{h-|F|}}_{W_{J}\cap W_F} \wedge^{k-|F|} V_{(h-|F|-1,1)} \Big). \label{paving-term3-expanded}
\end{align}}
For each such flat $F$, the restriction $\M|_F$ is a Boolean matroid on
$|F|$ elements.
By \eqref{boo-uni-invkl1}, we therefore have
\begin{align*}
Q^{W_F}_{\M|_{F}}(t) = \mathrm{Res}^{\s_{|F|}}_{W_F} Q^{\s_{|F|}}_{B_{|F|}}(t) = \mathrm{Res}^{\s_{|F|}}_{W_F} V_{(1^{|F|})}.
\end{align*}
Using the associativity of restriction, \eqref{paving-term3-expanded} can be rewritten as
\begin{align}
 &(-1)^{k-|F|}\mathrm{Ind}^{W}_{W_F}\sum_{[J] \in D(F,H)/W_F} \mathrm{Ind}^{W_F}_{W_J \cap W_F} \Big( \mathrm{Res}^{\s_{|F|}}_{W_{J} \cap W_F} V_{(1^{|F|})} \otimes \mathrm{Res}^{\s_{h-|F|}}_{W_{J}\cap W_F} \wedge^{k-|F|} V_{(h-|F|-1,1)} \Big) \nonumber\\
 = &(-1)^{k-|F|}\mathrm{Ind}^{W}_{W_F}\sum_{[J] \in D(F,H)/W_F} \mathrm{Ind}^{W_F}_{W_J \cap W_F} \Big( \mathrm{Res}^{\s_{|F|} \times \s_{h-|F|}}_{W_{J} \cap W_F} V_{(1^{|F|})} \otimes \wedge^{k-|F|} V_{(h-|F|-1,1)} \Big).\label{paving-term3-2}
\end{align}
As shown in \cite[Section~3]{karn2023equivariant}, we have
\begin{align*}
 D(F,H)/W_F \cong W_H \backslash C(F,H),
\end{align*}
where $C(F,H):=\{wF|w \in W ~ \text{and} ~ wF \subset H\}$.
Moreover, for every $J \in D(F,H)$, there exists a unique $T \in C(F,H)$ such that $W_J \cap W_F \cong W_H \cap W_T$.
Under this correspondence, we also have $|F|=|T|$.
Using these identifications, the expression in
\eqref{paving-term3-2} can be rewritten as
\begin{align} \label{paving-term3-identified}(-1)^{k-|F|}\sum_{[T] \in W_H \backslash C(F,H)}\mathrm{Ind}^{W}_{W_T \cap W_H} \Big( \mathrm{Res}^{\s_{|T|} \times \s_{h-|T|}}_{W_{H} \cap W_T} V_{(1^{|T|})} \otimes \wedge^{k-|T|} V_{(h-|T|-1,1)} \Big).
\end{align}
Letting $\mathcal{T}:=\{S| S \subset H , |S| \le k-2\}$. Substituting \eqref{paving-term3-identified} for
$\mathrm{Ind}^{W}_{W_F}\big(Q^{W_F}_{\M|_{F}}(t)\otimes (\mu^{W_F}_{\widetilde{\M}/F} - \mu^{W_F}_{\M/F})\big)
$
into \eqref{paving-term3-3}, we obtain
\begin{align*}
 &\sum_{[F] \in \{ S|S \subseteq H_{i}, |S| \le k-2, H_{i} \in \mathcal{H}\} /W} t^{\rk(\M/F)}~\mathrm{Ind}^{W}_{W_F}\Big( Q^{W_F}_{\M|_{F}}(t)\otimes (\mu^{W_F}_{\widetilde{\M}/F} - \mu^{W_F}_{\M/F}) \Big)\\
 &= \mathrm{Ind}^{W}_{W_H}\sum_{[T] \in \mathcal{T} /W_H} t^{k-|T|}\mathrm{Ind}^{W_H}_{W_T \cap W_H} \Big( \mathrm{Res}^{\s_{|T|} \times \s_{h-|T|}}_{W_{H} \cap W_T} V_{(1^{|T|})} \otimes \wedge^{k-|T|} V_{(h-|T|-1,1)} \Big).
\end{align*}
As in the analysis of the second term, we again use the standard fact that induction over orbits may be replaced by summation over all
elements.
Thus the above expression is equivalent to
\begin{align*}
&\mathrm{Ind}^{W}_{W_H}\sum_{T \in \mathcal{T}} t^{k-|T|}\mathrm{Res}^{\s_{|T|} \times \s_{h-|T|}}_{W_T \cap W_H} \Big( V_{(1^{|T|})} \otimes \wedge^{k-|T|} V_{(h-|T|-1,1)} \Big)\\
 =& \mathrm{Ind}^{W}_{W_H}\mathrm{Res}^{\s_h} _{W_H} \Big(\sum_{T \in \mathcal{T}} t^{k-|T|} V_{(1^{|T|})} \otimes \wedge^{k-|T|} V_{(h-|T|-1,1)} \Big)\\
 = &\mathrm{Ind}^{W}_{W_H}\mathrm{Res}^{\s_h} _{W_H}\sum^{k-2}_{i=0} \mathrm{Ind}^{\s_h}_{\s_{i} \times \s_{h-i}}\Big( V_{(1^{i})} \otimes \wedge^{k-i} V_{(h-i-1,1)} t^{k-i} \Big).
\end{align*}
This completes the proof.
\end{proof}

\begin{remark}
Elias, Miyata, Proudfoot, and Vecchi~\cite{elias2025categorical} introduced a framework 
of categorical valuative invariants for polyhedra and matroids. 
Within this framework, equivariant polynomial invariants, including the equivariant 
Kazhdan--Lusztig and $Z$-polynomials, admit categorical lifts that reflect their 
valuative properties. 
This approach enables equivariant computations for these invariants. 
It is natural to ask whether similar categorical techniques could be applied to the study of 
equivariant inverse $Z$-polynomials.
\end{remark}


We are now in the position to prove Theorem~\ref{thm-paving}.

\begin{proof}[Proof of Theorem \ref{thm-paving}]
Following \cite{ferroni2023stressed} and \cite{karn2023equivariant}, we compute the polynomial $y_{k,h}^{\s_h}(t)$ by considering the special matroid
$
 M_{k,h} := \U_{k-1,h} \oplus B_1,
$
equipped with the natural action of $\s_h$ on the
$\U_{k-1,h}$ component.
By \cite[Proposition~3.11]{ferroni2023stressed}, the uniform part $\U_{k-1,h}$ forms a stressed hyperplane of size $h$.
Relaxing this hyperplane yields the uniform matroid $\U_{k,h+1}$, so that
\begin{align}\label{YMrelax-YM}
 y^{\s_h}_{k,h}(t)
 =Y^{\s_h}_{\widetilde{M}_{k,h}}(t)-Y^{\s_h}_{M_{k,h}}(t)
 =\mathrm{Res}^{\s_{h+1}}_{\s_h} Y^{\s_{h+1}}_{\U_{k,h+1}}(t)
 -Y^{\s_h}_{M_{k,h}}(t).
\end{align}
Since the equivariant inverse Kazhdan--Lusztig polynomial is
multiplicative under direct sums, applying
\eqref{eq-ekl-uniform-1} from Proposition~\ref{prop-equi-uni} gives
\begin{align}\label{YM-1}
Y^{\s_h}_{M_{k,h}}(t)
 &= \Big(V_{(1)} + V_{(1)}t\Big) \Big( \sum_{i=0}^{\lfloor (k-1)/2 \rfloor} \sum_{x=0}^{i} \big(V_{(h-k+2,2^{x},1^{k-2x-2})} + V_{(h-k+3,2^{x-1},1^{k-2x-1})} \big) t^{i}\nonumber\\
 &\qquad + \sum_{i=0}^{\lfloor k/2-1 \rfloor} \sum_{x=0}^{i} \big(V_{(h-k+2,2^{x},1^{k-2x-2})} + V_{(h-k+3, 2^{x-1},1^{k-2x-1})} \big) t^{k-i-1} \Big).
\end{align}
For $0 \le i \le \lfloor \frac{k-1}{2} \rfloor$, define
\begin{align}\label{ci}
c_{i}:&=\sum_{x=0}^{i} \big(V_{(h-k+2,2^{x},1^{k-2x-2})} + V_{(h-k+3,2^{x-1},1^{k-2x-1})}\big)\nonumber\\
 &=\sum_{x=0}^{i} V_{(h-k+2,2^{x},1^{k-2x-2})} +\sum_{x=0}^{i-1} V_{(h-k+3,2^{x},1^{k-2x-3})}.
\end{align}
Then \eqref{YM-1} can be rewritten as
\begin{align*}
Y^{\s_h}_{M_{k,h}}(t)=
\begin{cases}
 c_{0}(1+t^{k})+\sum_{i=1}^{ \frac{k-1}{2} } (c_{i}+c_{i-1}) (t^{i}+t^{k-i})
 , &\text{if $k$ is odd}, \vspace{3mm} \\
 c_{0}(1+t^{k})+\sum_{i=1}^{ \frac{k}{2}-1} (c_{i}+c_{i-1}) (t^{i}+t^{k-i})+ 2c_{\frac{k}{2}-1}t^{\frac{k}{2}},
 &\text{if $k$ is even}.
 \end{cases}
\end{align*}
On the other hand,
\begin{align}\label{YM-relax-1}
 Y^{\s_h}_{\widetilde{M}_{k,h}}(t) &=\mathrm{Res}^{\s_{h+1}}_{\s_h} Y^{\s_{h+1}}_{\U_{k,h+1}}(t)\nonumber\\
 &= \sum_{i=0}^{\lfloor k/2 \rfloor} \sum_{x=0}^{i}\Big(\mathrm{Res}^{\s_{h+1}}_{\s_h} V_{(h-k+2,2^{x},1^{k-2x-1})} + \mathrm{Res}^{\s_{h+1}}_{\s_h}V_{(h-k+3,2^{x-1},1^{k-2x})} \Big) t^{i} \nonumber \\
 &\qquad+ \sum_{i=0}^{\lfloor (k-1)/2\rfloor} \sum_{x=0}^{i}\Big(\mathrm{Res}^{\s_{h+1}}_{\s_h} V_{(h-k+2,2^{x},1^{k-2x-1})} + \mathrm{Res}^{\s_{h+1}}_{\s_h}V_{(h-k+3,2^{x-1},1^{k-2x})} \Big) t^{k-i}.
\end{align}
By \cite[Lemma 4.1]{karn2023equivariant}, if $\lambda$ is a partition of $h+1$, then
\begin{align*}
 \mathrm{Res}^{\s_{h+1}}_{\s_h} V_{\lambda}=\bigoplus_{\lambda^{'}} V_{\lambda^{'}},
\end{align*}
where $\lambda'$ ranges over partitions of $h$ obtained from $\lambda$
by removing a single box.
In particular, we have for $0 \le x \le \lfloor \frac{k-1}{2} \rfloor$,
\begin{align}\label{RES-1}
 \mathrm{Res}^{\s_{h+1}}_{\s_h} V_{(h-k+2,2^{x},1^{k-2x-1})} =
 V_{(h-k+1,2^x, 1^{k-2x-1})} + V_{(h-k+2,2^{x-1},1^{k-2x})} + V_{(h-k+2,2^x,1^{k-2x-2})},
\end{align}
and for $1 \le x \le \lfloor \frac{k}{2} \rfloor$,
\begin{align}\label{RES-2}
 \mathrm{Res}^{\s_{h+1}}_{\s_h}V_{(h-k+3,2^{x-1},1^{k-2x})}= V_{(h-k+2,2^{x-1}, 1^{k-2x})} + V_{(h-k+3,2^{x-2},1^{k-2x+1})} + V_{(h-k+3,2^{x-1},1^{k-2x-1})}.
\end{align}
Substituting \eqref{RES-1} and \eqref{RES-2} into \eqref{YM-relax-1}, we obtain
\begin{align*}
Y^{\s_h}_{\widetilde{M}_{k,h}}(t)=
\begin{cases}
\sum_{i=0}^{ (k-1)/2 } d_i (t^{i}+t^{k-i}),
& \text{if $k$ is odd}, \vspace{3mm}\\
\sum_{i=0}^{ k/2-1 } d_i (t^{i}+t^{k-i}) \\
\quad
+ \big(
d_{k/2-1}
+ V_{(h-k+2,2^{k/2-1})}
+ V_{(h-k+3,2^{k/2-2},1)}
\big) t^{k/2},
& \text{if $k$ is even},
\end{cases}
\end{align*}
where
\begin{align}\label{di}
 d_{i}:=& V_{(h-k+2,2^{i},1^{k-2i-2})}
 + V_{(h-k+3,2^{i-1},1^{k-2i-1})} + \sum_{x=0}^{i} V_{(h-k+1,2^x, 1^{k-2x-1})}\nonumber\\
 &\qquad\qquad\qquad\qquad\qquad+ 3\sum_{x=0}^{i-1}V_{(h-k+2,2^{x},1^{k-2x-2})} + 2\sum_{x=0}^{i-2}V_{(h-k+3,2^x,1^{k-2x-3})}.
\end{align}

Now we prove that
\begin{align}\label{prf-ysh-eq}
 y^{\s_h}_{k,h}=Y^{\s_h}_{\U_{k,h}}(t)-\frac{1+(-1)^k}{2} V_{(h-k+2,2^{k/2 -1})}t^{\frac{k}{2}},
\end{align}
by considering the parity of $k$. Since $Y^{W}_{\M}(t)$ is palindromic, it suffices to compute the difference between the coefficients of $t^i$ in $Y^{\s_h}_{\widetilde{M}_{k,h}}(t)$ and $Y^{\s_h}_{M_{k,h}}(t)$ for $0 \leq i \leq \lfloor k/2 \rfloor$.
Suppose that $k$ is odd. Then
\begin{align*}
[t^{i}]Y^{\s_h}_{\widetilde{M}_{k,h}}(t)-[t^{i}]Y^{\s_h}_{M_{k,h}}(t)=
 \begin{cases}
 d_{i}-c_{i}-c_{i-1}
 , &\text{if $1 \le i \le \frac{k-1}{2}$ }, \\
 d_{0}-c_{0},
 &\text{if $i=0$}.
 \end{cases}
\end{align*}
For $i=0$, substituting \eqref{ci} and \eqref{di}, we have
\begin{align*}
 d_{0}-c_{0}=V_{(h-k+1,1^{k-1})}+V_{(h-k+2,1^{k-2})}-V_{(h-k+2,1^{k-2})}=V_{(h-k+1,1^{k-1})},
\end{align*}
which matches the constant term of $Y^{\s_h}_{\U_{k,h}}(t)$.
For $1 \le i \le (k-1)/2$, we have
\begin{align}\label{di-ci-ci-1}
 &d_{i}-c_{i}-c_{i-1}\nonumber\\
 =
 & V_{(h-k+2,2^{i},1^{k-2i-2})} + V_{(h-k+3,2^{i-1},1^{k-2i-1})}
 +\sum_{x=0}^{i} V_{(h-k+1,2^x, 1^{k-2x-1})}\nonumber
 \\
 &+ 3\sum_{x=0}^{i-1}V_{(h-k+2,2^{x},1^{k-2x-2})} + 2\sum_{x=0}^{i-2}V_{(h-k+3,2^x,1^{k-2x-3})}-\sum_{x=0}^{i} V_{(h-k+2,2^{x},1^{k-2x-2})}\nonumber
 \\
 &-\sum_{x=0}^{i-1} V_{(h-k+3,2^{x},1^{k-2x-3})}-\sum_{x=0}^{i-1} V_{(h-k+2,2^{x},1^{k-2x-2})} -\sum_{x=0}^{i-2} V_{(h-k+3,2^{x},1^{k-2x-3})}.
\end{align}
Note that
\begin{align}\label{h-k+2-eq}
 &V_{(h-k+2,2^{i},1^{k-2i-2})} + 3\sum_{x=0}^{i-1}V_{(h-k+2,2^{x},1^{k-2x-2})}
 -\sum_{x=0}^{i} V_{(h-k+2,2^{x},1^{k-2x-2})}
 \nonumber\\
 &\qquad\qquad\qquad\qquad\qquad\qquad\qquad-\sum_{x=0}^{i-1} V_{(h-k+2,2^{x},1^{k-2x-2})}=\sum_{x=0}^{i-1} V_{(h-k+2,2^{x},1^{k-2x-2})}
\end{align}
and
\begin{align}\label{h-k+3-eq}
 &V_{(h-k+3,2^{i-1},1^{k-2i-1})}
 + 2\sum_{x=0}^{i-2}V_{(h-k+3,2^x,1^{k-2x-3})}\nonumber
 \\
 &\qquad \qquad \qquad \qquad-\sum_{x=0}^{i-1} V_{(h-k+3,2^{x},1^{k-2x-3})}-\sum_{x=0}^{i-2} V_{(h-k+3,2^{x},1^{k-2x-3})}=0.
\end{align}
Substituting \eqref{h-k+2-eq} and \eqref{h-k+3-eq} into \eqref{di-ci-ci-1} yields
\begin{align*}
 d_{i}-c_{i}-c_{i-1}&=\sum_{x=0}^{i-1} V_{(h-k+2,2^{x},1^{k-2x-2})} +\sum_{x=0}^{i} V_{(h-k+1,2^x, 1^{k-2x-1})}\\
 &= \sum_{x=0}^{i} V_{(h-k+1,2^x, 1^{k-2x-1})}+V_{(h-k+2,2^{x-1},1^{k-2x})},
\end{align*}
which equals the coefficient of $t^{i}$ in $Y^{\s_h}_{\U_{k,h}}(t)$.
Thus, the case where $k$ is odd is verified.
Now suppose
that $k$ is even. Then
\begin{align*}
[t^{i}]Y^{\s_h}_{\widetilde{M}_{k,h}}(t)
-
[t^{i}]Y^{\s_h}_{M_{k,h}}(t)
=
\begin{cases}
d_{i}-c_{i}-c_{i-1},
& \text{if $1 \le i \le k/2-1$}, \\[4pt]
d_{0}-c_{0},
& \text{if $i=0$}, \\[6pt]
\begin{aligned}
& d_{k/2-1}
+ V_{(h-k+2,2^{k/2-1})} \\
& \quad
+ V_{(h-k+3,2^{k/2-2},1)}
- 2c_{k/2-1},
\end{aligned}
& \text{if $i=k/2$}.
\end{cases}
\end{align*}
For $0 \le i \le k/2-1$, the analysis is identical to odd $k$.
For $i = k/2$,
combining \eqref{ci} and \eqref{di}, we obtain
\begin{align*}
 &d_{k/2-1}+V_{(h-k+2,2^{k/2-1})}+V_{(h-k+3,2^{k/2-2},1)}-2c_{k/2-1}\\
 =&2V_{(h-k+2,2^{k/2-1})}+2V_{(h-k+3,2^{k/2-2},1)}+
 \sum_{x=0}^{k/2-1} V_{(h-k+1,2^x, 1^{k-2x-1})} + 3\sum_{x=0}^{k/2-2}V_{(h-k+2,2^{x},1^{k-2x-2})}\\
 &\quad+2\sum_{x=0}^{k/2-3}V_{(h-k+3,2^x,1^{k-2x-3})}
 -2\sum_{x=0}^{k/2-1} V_{(h-k+2,2^{x},1^{k-2x-2})}
 -2\sum_{x=0}^{k/2-2} V_{(h-k+3,2^{x},1^{k-2x-3})}\\
 =&\sum_{x=0}^{k/2-1} V_{(h-k+1,2^x, 1^{k-2x-1})}+\sum_{x=0}^{k/2-2}V_{(h-k+2,2^{x},1^{k-2x-2})}\\
 =&\sum_{x=0}^{k/2}\Big( V_{(h-k+1,2^{x},1^{k-2x-1})} + V_{(h-k+2,2^{x-1},1^{k-2x})}\Big)- V_{(h-k+2,2^{k/2-1})},
\end{align*}
which matches $[t^{k/2}] Y^{\s_h}_{\U_{k,h}}(t) - V_{(h-k+2,2^{k/2-1})}$ from Proposition~\ref{prop-equi-uni}.
Therefore, equation \eqref{prf-ysh-eq} holds for all $k$,
and the proof is complete.
\end{proof}

We now give a proof of Theorem \ref{cor-paving-or-z}.

\begin{proof}[Proof of Theorem \ref{cor-paving-or-z}]
We begin with the equivariant difference formula obtained in
Theorem~\ref{thm-paving}. Specializing that result to the trivial group and taking dimensions
on both sides of \eqref{eq-paving-thm}, we obtain
\begin{align}\label{coro-paving-eq1}
Y_{\widetilde{\M}}(t)-Y_{\M}(t)=Y_{\U_{k,h}}(t)-\frac{1+(-1)^k}{2} \operatorname{dim} V_{(h-k+2,2^{k/2 -1})} t^{\frac{k}{2}}.
\end{align}
Here $\widetilde{\M}$ is obtained from $\M$ by relaxing
a single stressed hyperplane of size $h \ge k$.
Recall that for any partition $\lambda \vdash n$,
the dimension of the irreducible
$\mathfrak{S}_n$-representation $V_\lambda$
equals the number of standard Young tableaux of shape $\lambda$.
By the hook-length formula \cite{frame1954hook}, we have
\begin{align}\label{coro-paving-eq2}
\operatorname{dim} V_{(h-k+2,2^{k/2 -1})} = \frac{4h}{(2h-k)(2h-k+2)}\dbinom{h-1}{\frac{k}{2},\frac{k}{2}-1,h-k}.
\end{align}
Substituting \eqref{coro-paving-eq2} into \eqref{coro-paving-eq1} yields
\begin{align}\label{coro-paving-eq3}
 Y_{\widetilde{\M}}(t)-Y_{\M}(t)=Y_{\U_{k,h}}(t)-\frac{2h\big(1+(-1)^k\big)}{(2h-k)(2h-k+2)} \dbinom{h-1}{\frac{k}{2},\frac{k}{2}-1,h-k}t^{\frac{k}{2}} .
\end{align}
Since
relaxing all stressed hyperplanes of size at least $k$
in a paving matroid of rank $k$ on $n$ elements yields
the uniform matroid $\U_{k,n}$,
applying \eqref{coro-paving-eq3} over all such relaxations
therefore gives the claimed formula.
\end{proof}

\begin{remark}
Let $\M$ be a sparse paving matroid of rank $k$ and cardinality $n$. Suppose $\M$
has $\lambda$ circuit-hyperplanes. Recall that for sparse paving matroids, the notions of stressed
hyperplanes and circuit-hyperplanes coincide.
Moreover, every stressed hyperplane has cardinality $k$.
Therefore, by Theorem~\ref{cor-paving-or-z}, we obtain
\begin{align*}
 Y_{\M}(t)=Y_{\U_{k,n}}(t)-\lambda \Big((1+t)^k- \frac{1}{2} \big(1+(-1)^k\big)C_{\frac{k}{2}}t^{\frac{k}{2}} \Big),
\end{align*}
where $C_n=\frac{1}{n+1}\binom{2n}{n}$ is the $n$-th Catalan number. This formula coincides with \cite[Theorem~1.3]{gao2025inverse}.
\end{remark}

We give a proof of Theorem \ref{thm-uni-bigest}.

\begin{proof}[Proof of Theorem \ref{thm-uni-bigest}]
Since $\M$ is paving, relaxing all stressed hyperplanes of size at least $k$
in $\M$ yields the uniform matroid $\U_{k,E}$.
Applying Theorem~\ref{thm-paving} at each relaxation step,
it suffices to prove that for every $h\ge k$,
each coefficient of $y^{\mathfrak S_h}_{k,h}(t)$ is an honest representation of
$\mathfrak S_h$.

	If $k$ is odd, then
	$y^{\mathfrak S_h}_{k,h}(t)=Y^{\mathfrak S_h}_{\U_{k,h}}(t)$,
	and the claim follows immediately from
	Proposition~\ref{prop-equi-uni}.
Assume therefore that $k$ is even.
By definition, the only potentially nontrivial coefficient of
$y^{\mathfrak S_h}_{k,h}(t)$ occurs in degree $k/2$.
It is thus enough to verify that
\begin{align}\label{pro-paving-eq1}
[t^{k/2}]\,Y^{\mathfrak S_h}_{\U_{k,h}}(t)
-
V_{(h-k+2,2^{k/2-1})}
\in \mathrm{Rep}(\mathfrak S_h).
\end{align}
	By Proposition~\ref{prop-equi-uni}, we have
	\begin{align*}
	[t^{k/2}]\,Y^{\mathfrak S_h}_{\U_{k,h}}(t)
&=\sum_{x=0}^{k/2}\big( V_{(h-k+1,2^{x},1^{k-2x-1})}+V_{(h-k+2,2^{x-1},1^{k-2x})}\big) \\
&=V_{(h-k+2,2^{k/2-1})}
 +\sum_{x=0}^{k/2-1}\big( V_{(h-k+1,2^{x},1^{k-2x-1})}+V_{(h-k+2,2^{x-1},1^{k-2x})}\big),
\end{align*}
where we adopt the convention that $V_\lambda=0$
if $\lambda$ is not a partition.
Consequently,
$$
[t^{k/2}]\,Y^{\mathfrak S_h}_{\U_{k,h}}(t)-V_{(h-k+2,2^{k/2-1})}
=\sum_{x=0}^{k/2-1}\big( V_{(h-k+1,2^{x},1^{k-2x-1})}+V_{(h-k+2,2^{x-1},1^{k-2x})}\big)
\in \mathrm{Rep}(\mathfrak S_h),
$$
which proves \eqref{pro-paving-eq1} and completes the proof.
\end{proof}

\vspace{4mm}

\noindent{\bf Acknowledgements.}
We thank Xuan Ruan for helpful discussions.
The first author is supported by the National Science Foundation for Post-doctoral Scientists of China (No.2020M683544).
The third author is supported by the National Science Foundation of China (No.12271403).

\bibliographystyle{alpha}
\bibliography{reference}

\end{document}